\newtheorem{theorem}{Theorem}[section]
\newtheorem{lem}{Lemma}[section]
\newtheorem{remark}{Remark}[section]
\numberwithin{equation}{section}
\newtheorem*{theorem*}{Theorem}
\newtheorem{defi}{Definition}[section]
\newtheorem{prop}{Proposition}
\newtheorem{step}{Step}
\newcommand{\R}{\mathbb{R}}
\newcommand{\be}{\begin{equation}}
\newcommand{\ee}{\end{equation}}
\newcommand{\sx}{{\Delta}}
\newcommand{\eps}{\varepsilon}
\newcommand{\p}{\partial}
\newcommand{\Rd}{{\mathbb{R}^d}}
\newcommand{\Prob}{\mathcal{P}}
\newcommand{\T}{\mathcal{T}}
\newcommand{\F}{\mathcal{F}}
\newcommand{\K}{\mathcal{K}}
\newcommand{\E}{\mathbb{E}}
\newcommand{\dm}{ h }
\newcommand{\pp}{\bar p}
\begin{document}

\subjclass{35Q91, 35F20, 91A26}

\title[Evolutionary game theory in mixed strategies]
{Evolutionary game theory in mixed strategies: from microscopic interactions to kinetic equations.}

\author{Juan Pablo Pinasco}
\address{IMAS (UBA-CONICET) and Departamento de Matem\'atica, Facultad de Ciencias Exactas y Naturales,
 Universidad de Buenos Aires, Ciudad Universitaria, 1428 Buenos Aires, Argentina}
\email{jpinasco@dm.uba.ar, jpinasco@gmail.com}

\author{Mauro Rodriguez-Cartabia}
\address{IMAS (UBA-CONICET) and Departamento de Matem\'atica, Facultad de Ciencias Exactas y Naturales, Universidad de Buenos Aires, Ciudad Universitaria, 1428 Buenos Aires, Argentina}
\email{mrodriguezcartabia@gmail.com}

\author{Nicolas Saintier}
\address{IMAS (UBA-CONICET) and Departamento de Matem\'atica, Facultad de Ciencias Exactas y Naturales, Universidad de Buenos Aires, Ciudad Universitaria, 1428 Buenos Aires, Argentina}
\email{nsaintie@dm.uba.ar}

\thanks{This work was partially supported by Universidad de Buenos Aires under grants
20020170100445BA, by Agencia Nacional de Promocion Cientifica y Tecnica PICT 201-0215 and PICT 2016 1022.}

\begin{abstract}
In this work we propose a kinetic formulation for evolutionary game theory for zero sum games when the agents use mixed strategies. We
start with a simple adaptive rule, where after an encounter each agent increases the probability of play
the successful pure strategy used in the match. We derive the Boltzmann equation which describes the macroscopic effects of
this microscopical rule, and we
obtain a first order, nonlocal, partial differential equation as the limit when the probability change goes to zero.

We study the relationship between this equation and the well known replicator equations, showing the equivalence between the
concepts of Nash equilibria, stationary solutions of the partial differential equation, and the equilibria  of the replicator equations. Finally,
we relate the long time behavior of solutions to the partial differential equation and the stability
of the replicator equations.
\end{abstract}

\keywords{kinetic models, evolutionary game theory, mean field games}

\maketitle


\section{Introduction}

Evolutionary game theory, introduced in the 70s by Jonker and Taylor \cite{taylor1978evolutionary} and Maynard Smith \cite{smith1982evolution},
 is a beautiful mix of biology and game theory.
Each player in some  population interact repeatedly with other players by playing a game,
obtaining a pay-off as reward or punishment of the pure strategies or actions they used. We
can consider now an {\it evolutive} mechanism, where agents with higher pay-offs have more
offsprings, or an {\it adaptive} one, where less successful players change strategies and
imitate the ones which performed better. This process is mathematically formalized using
systems of ordinary differential equations or difference equations. Essentially, we have an
equation describing the evolution of the proportion of players in each pure strategy,
 usually in the form of a rate equation where the growth rate is proportional to the {\it fitness} of the strategy.
The so-called replicator equations (see \eqref{ReplicatorSystemIntro} below)  is a famous and well-studied example of such systems. However,
observe that the fitness also evolves, since it depends on the distribution of the population on the strategies, so
interesting problems appears concerning  the existence and  stability of  fixed points, and their
relationship with the Nash equilibria of the underlying game.
We refer the interested reader to the monographs \cite{cressman2013stability,hofbauer1998evolutionary,sandholm2010population} for details.

Similar mechanisms when players use mixed strategies are less frequent in the literature.
Recently, in  \cite{pinasco2018game} we introduced an evolutive model for finitely many players, and we obtained
a systems of ordinary differential equations describing the evolution of the mixed strategy of each agents.
The number of equations is then proportional to the number of players.
Thus, the limit of infinitely many players seems to be intractable in this way.
However, a simple hydrodynamic interpretation is possible, leading to  a first order
partial differential equation modeling the strategy updates: we can think of the players as
immersed in a fluid, flowing in the simplex $\Delta$ of mixed strategies, following the drift induced by
 the gradient of the fitness of the strategies given the distribution
of  the whole population on this simplex.

\medskip

Let us present a  brief outline of our model and the main results in this work, and see Section
\S 2 for the precise definitions, notations, and previous results. We consider a population of
agents playing a finite, symmetric, zero sum game, with pay-off matrix $A$. Each player
starts with a given mixed strategy, i.e. a probability distribution on the set of pure  strategies.
They are randomly matched in pairs, and select at random  a pure strategy  using their
respective mixed strategies which they use to play the game. After the match, each player
changes its mixed strategy by adding a small quantity $h$ to the winner strategy, and
reducing the loosing one in the same amount. Some care is necessary in the definition of $h$
to avoid values greater than one, or less than zero. This is needed only near the boundary of
the simplex so we replace the constant $h$ by a function $h(p)$ satisfying
$h(p)<dist(p,\partial \Delta)$. This adaptive, microscopic rule, first introduced and analyzed
in \cite{pinasco2018game} for finitely many players, induces  a flow of the players in the
simplex, whose study is the main purpose of this paper.

Let us call $u_t^h$ the distribution of agents on the simplex.
We can think of $u_t^h(p)dp$ as the probability to find a player with an strategy $q$ in a cube of
area $dp$ centered at $p$.
Now, it is possible to describe the time evolution of $u_t^h$ with a Boltzmann-like equation,
whose collision part reflects the dynamics in the changes of strategies due to encounters.
This procedure  is strongly inspired by the kinetic theory of rarefied gases and granular flows
and has been successfully implemented
to model a wide variety of phenomena in applied sciences (see e.g.  \cite{arlotti2002generalized,bellomo2008modeling,bellomo2013complex,pareschi2013interacting,PPS,perez2018opinion} and
the surveys in Ref. \cite{naldi2010mathematical} for further details).

However, Boltzmann-like equation are challenging objects to study. Performing the so-called
grazing limit or quasi-invariant limit (see
\cite{degond1992fokker,desvillettes2001rigorous,desvillettes1992asymptotics,pareschi2013interacting}),
we can approximate it by a Fokker-Planck equation which is satisfied by $v_t= \lim_{h\to 0}
u_{t/h}^h$. Besides the intrinsic stochastic nature of the interactions, we can add a small
noise in the agents' movements. The Fokker-Planck equation then reads
$$
\frac{\partial v_t}{\partial t} + div(\mathcal{F}[v_t]v_t)
= \lambda \sum_{i,j=1}^d Q_{i,j} \frac{\partial^2 }{\partial p_i \partial p_j}(Gv_t),
$$
where $Q$ and $G$ depend on $v_t$ and the intensity of the noise, $\lambda\ge 0$ depends on the ratio of the noise to
the convection term, and the vector-field $\mathcal{F}[v_t]$, which depends on $v_t$,  is given by
$$ \mathcal{F}[v_t]=h(p)[p_ie_i^TA\bar{p}(t) + \bar{p}_i(t)e_i^TAp],$$
with $ \bar{p}(t)=\int_\Delta p\,dv_t(p)$ the mean strategy at time $t$.

In particular, when $\lambda=0$, i. e., when the convection term dominates the noise, we obtain the first order, nonlocal, mean field equation
\begin{equation}\label{TransportEquIntro}
 \frac{\partial v}{\partial t} + div(\mathcal{F}[v_t]v_t) = 0.
\end{equation}
Existence and uniqueness of solutions for  \eqref{TransportEqu} follows by using the
classical ideas of \cite{braun1977vlasov,dobrushin1979vlasov,neunzert1974approximation}
(see also \cite{canizo2011well,golse2016dynamics}).

One of the main focus of the paper is the study of the long time behavior of solutions to
\eqref{TransportEquIntro} and the stability of the stationary solutions of the form $v = \delta_p$,
where $\delta_p$ is a Dirac mass at $p$;, which corresponds to the case where all the
players use the same mixed strategy $p$.

These issues are closely related to the behaviour of the integral curves of the vector-field $\F$.
It is worth noticing the close resemblance of $\F$ with the replicator equations
\begin{equation}\label{ReplicatorSystemIntro}
 \frac{dp_i}{dt} = p_i(e_i^TAp - p^TAp).
\end{equation}
We can thus expect a relationship between the long-time behaviour of the solutions of
\eqref{TransportEquIntro} and of the solutions to the replicator equations
\eqref{ReplicatorSystemIntro}. A well-known result in evolutionary game theory, known as the
Folk Theorem (see \cite{hofbauer2003evolutionary} and Theorem \ref{FolkThm}  below),
relates the long-time behaviour of the solution of the replicator equation to the Nash
equilibria of the zero-sum game with pay-off matrix $A$.

\medskip

Our first main theorem  can be thought of as a generalization of the Folk Theorem.
Indeed we prove that  the following statements are equivalent:
\begin{itemize}
\item $p$ is a Nash equilibrium of the game.
\item $\delta_p$ is an stationary solution to \eqref{TransportEquIntro},
\item $p$ is an equilibrium of the replicator equations \eqref{ReplicatorSystemIntro},
\item $Ap=0$, where $A$ is the pay-off matrix of the game.
\end{itemize}

Our second main theorem states that if $v_t$ is a solution to \eqref{TransportEquIntro}, then
the  mean strategy of the population, $$ \bar{p} = \int_\Delta p dv_t$$ is a solution to the
replicator equations \eqref{ReplicatorSystemIntro} while it stays in $\{h=c\}$. See Section \S
5 for the precise statement of both theorems.

 Finally, we  show some results about  the asymptotic behaviour of the solution $v_t$ of
\eqref{LimitEqq} and their relationship with  the game with pay-off matrix $A$.

In the simplest case,  namely the case of a two-strategies game, we can precisely describe
the asymptotic behavior of $v_t$. Then, we turn our attention to symmetric games with an
arbitrary number of strategies. Following \cite{sandholm2010population}, a zero sum
symmetric game has no stable interior equilibria, and periodic  solutions to the replicator
equations appear as in the classical rock-paper-scissor. So, we will show that if all  the
trajectories of the replicator equations are periodic orbits, then $v_t$ is also periodic.

\bigskip

Let us compare briefly other works dealing with similar issues. To our knowledge, the first
work dealing with evolutionary game theory for mixed strategies is due to Boccabella,
Natalini and Pareschi  \cite{BNP}. They considered an integro-differential equation modeling
the evolution of a population on the simplex of mixed strategies following the idea behind
the replicator equations. That is, the population in a fixed strategy $p$ will increase
(respectively, decrease) if the expected pay-off given the distribution of agents in the
simplex is greater than (resp., lower than) the mean pay-off of the population. A full analysis
of the stability and convergence to equilibria is performed for binary games.   Let us remark
that
 their dynamics inherit several properties of the replicator equations, and if some mixed strategy $p$
 has zero mass in the initial datum, the solution  will remains equal to zero forever. So, agents cannot learn
 the optimal way of play, and they are faced to extinction or  not depending on the
mixed strategies which are present in the initial datum. In some sense, this is equivalent to
consider each mixed strategy as a pure one in a new game. The mathematical theory for
infinitely many pure strategies was developed by Oeschssler and Riedel in
\cite{oechssler2001evolutionary}, and studied later by Cressman
\cite{cressman2005stability},  Ackleh and Cleveland  \cite{cleveland2013evolutionary}, and
also Mendoza-Palacios and Hern{\'a}ndez-Lerma \cite{mendoza2015evolutionary}.

 Finally,  we can cite also \cite{albi2019boltzmann,marsan2016stochastic,salvarani2019kinetic,tosin2014kinetic} where
binary (or more general) games have pay-offs depending on some real parameters, and they
study the distribution of players on this space of parameters (say, wealth, velocity, opinion,
among many other characteristics). Now, the strategy that they play  in the binary game is
selected using partial or total knowledge of the global distribution of agents. For example,
we can assume that $x$ represent the wealth of an agent, and in the game they will
cooperate or not depending on the median of the society; if they cooperate, a fraction of
wealth is transferred from the richer to the poorer, on the other hand, if they does not
cooperate, no changes occur.

\bigskip

The paper is organized as follow. In Section \S 2 we first recall some basic results concerning
game theory, the replicator equations and measure theory to make the paper self-contained.
We then present in Section \S 3  the model we are concerned with,   and we deduce  in
Section \S 4 the partial differential equations allowing to study the long-time behaviour of the
system. In  Section \S 5 we prove the generalization of the Folk Theorem.  Section \S 6  is
devoted to the study of the asymptotic behaviour and the stability of the stationary solutions
to the mean-field equation \eqref{TransportEquIntro} and their relationship with the
replicator equations, and we present agent based and numerical solutions of the
model. The lengthy or technical proofs of existence and uniqueness of solutions of the relevant
equations are postponed to the Appendix for a better readability
of the paper.

\section{Preliminaries. }

\subsection{Preliminaries on game theory}

We briefly recall some basics about game theory and refer to the excellent  references
available on general game theory  for more details (e. g. \cite{laraki2019mathematical}).
Since we are concerned in this paper with two players, symmetric, zero-sum, finite games in
normal form, we will limit our exposition to this setting.

A two-players finite game in normal form consists of two players named I and II, two finite
sets $S^1=\{s_1,\ldots,s_{d_1}\}$ and $S^2=\{\tilde s_1,\ldots,\tilde  s_{d_2}\}$, and two
matrices $A=(a_{ij})_{1\le i\le d_1,1\le j\le d_2}$, $B=(b_{ij})_{1\le i\le d_1,1\le j\le d_2} \in
\R^{d_1\times d_2}$. The elements of $S^1$ (respectively, $S^2$) are the pure strategies of
the first (resp., second) player and models the actions he can choose. Once both players
have chosen a pure strategy each, say I chosed $s_i\in S^1$ and II chosed $\tilde s_j\in S^2$,
then I receives the pay-off $u^1(s_i,\tilde s_j):=a_{ij}$ and II receives $u^2(s_i,\tilde
s_j)=:b_{ij}$. Thus the pay-off received by each player depends on the pure strategies chosen
by each players and on their pay-off functions $u^1$ and $u^2$. It is convenient to identify
$s_i$ with the the i-th canonical vector $e_i$ of $\R^{d_1}$, and $\tilde s_j$ with the the j-th
canonical vector $e_j$ of $\R^{d_2}$. This way the pay-off of I and II are
$$ u^1(s_i,\tilde s_j)=e_i^TAe_j=a_{ij},\qquad u^2(s_i,\tilde s_j)=e_i^TBe_j=b_{ij}. $$
The game is said:
\begin{itemize}
\item symmetric, when I and II are indistinguishable both from the point of view of the sets
    of actions available and the pay-off functions: \begin{itemize} \item[]  $S^1=S^2=:S$, and
      \item[] $u^1(s,\tilde s)=u^2(\tilde s,s)$ for any $(s,\tilde s)\in S\times S$.\end{itemize}
          This last equality means that $A=B^T$.
\item zero-sum if $u^1+u^2=0$, i. e. $B=-A$.  This means that the gain of one player is
    exactly the loss of the other one.
\end{itemize}
Notice in particular that the game is symmetric and zero-sum if and only if $A^T=-A$, in other
words, the matrix $A$ is antisymmetric.

We illustrate the above definitions with the popular Rock-Paper-Scissor game.
This is a two-players zero-sum  game with pure strategies
$$S^1=S^2=\{Rock, Paper, Scissor\}=\{e_1,e_2,e_3\}$$
(we identifed as before each pure strategy with the canonical vectors of $\R^3$.)
We then define the pay-off matrix $A$ of the first player as
\begin{equation}\label{matrizRPS}
A=\left(\begin{array}{rrr} 0 & -a & b\\ b& 0& -a\\ -a & b & 0\end{array}\right)
\end{equation}
where $a$, $b >0$, and the pay-off matrix of the second player is $B=-A$ being he game
zero-sum by definition. For instance  if I plays Paper and II plays Rock then I earns $a_{21}=b$
(so II looses b), and if I plays Scissor and II Rock then I earns $a_{32}=-a$ (and thus II gains
a). When $a=b$ then  $A$ is antisymmetric meaning that the  game is symmetric.

A central concept in game theory is that of Nash equilibrium. A Nash equilibrium is a pair
$(s^*,\tilde s^*)\in S^1 \times S^2$ such that neither player has incentive to change its
action given the action of the other player: there hold at the same time
$$ u^1(s^*,\tilde s^*)\ge u^1(s,\tilde s^*) \qquad \text{for any $s\in S^1$, $s\neq s^*$, }$$
and
$$ u^2(s^*,\tilde s^*)\ge u^2(s^*,\tilde s) \qquad \text{for any $\tilde s\in S^2$,
	$\tilde s\neq \tilde s^*$. }$$ A Nash equlibrium thus models a status quo situation. When
the above inequalities are strict, the Nash equilibrim is said to be strict. However, there not
always exists a Nash equilibrium in pure strategies as can be seen for instance in the
Rock-Paper-Scissor game \eqref{matrizRPS}: by symmetry, if both players are playing
some fixed pure strategies, both have an incentive to change to another strategy.

The main mathematical issue here is the lack of convexity of the strategy spaces $S^1$ and
$S^2$. This motivates the introduction of mixed strategies as probability measures over the
set of pure strategies: a mixed strategy for I is a vector $p=(p_1,\ldots,p_{d_1})$ where $p_i$
is the probability to play the i-th pure strategy $s_i$. Thus $p_i\in [0,1]$ and $\sum_i p_i=1$.
Remember that we identify the pure strategies with the canonical vectors of $\R^{d_1}$. Let
$$\Delta_1=\{p=(p_1,\ldots,p_{d_1})\in\R^{d_1}:\, p_1,\ldots,p_{d_1}\ge 0,\, \sum_i p_i=1 \} $$
be the simplex in $\R^{d_1}$, and similarly we denote $\Delta_2$ the simplex in
$\R^{d_2}$. We extend the pay-off functions $u^1$ and $u^2$ to $\Delta_1\times \Delta_2$
as expected pay-offs in the following way: for $(p,\tilde p)\in \Delta_1\times \Delta_2$,
$$ u^1(p,\tilde p) = \sum_{i,j} p_i\tilde p_j u^1(s_i,\tilde s_j)
= \sum_{ij} p_i\tilde p_j a_{ij} = p^TA\tilde p $$ and similarly,
$$ u^2(p,\tilde p) = p^TB\tilde p.  $$

We can then extend the notion of Nash equlibrium to mixed strategies saying that
$(p^*,\tilde p^*)\in \Delta_1\times \Delta_2$ is a Nash equilibrium if  at the same time
$$ u^1(p^*,\tilde p^*)\ge u^1(p,\tilde p^*) \qquad \text{for any $p\in \Delta_1$,
	$p\neq p^*$, }$$
and
$$ u^2(p^*,\tilde p^*)\ge u^2(p^*,\tilde p) \qquad \text{for any $\tilde p\in \Delta_2$,
	$\tilde p\neq \tilde p^*$. }$$ Nash'celebrated Theorem states that a finite  game in
normal form always has a Nash equilibrium in mixed strategies, we refer to
\cite{geanakoplos2003nash} for a simple proof. Moreover, when the game is symmetric (so
that $S:=S^1=S^2$) there always exists a Nash equilibrium of the form
$(p^*,p^*)\in\Delta\times\Delta$. For instance, in the symmetric Rock-Paper-Scissor game
\eqref{matrizRPS} with $a=b$, the unique symmetric Nash equilibrium is $p^*=(1/3,1/3,1/3)$.
This means that no players has an incentive to deviate when they  choose their action with
equiprobability. Of course, for zero sum games, the existence of an equilibria goes back to
Von Neumann's Minimax Theorem.

\subsection{The replicator equations}\label{subseccionreplicador}

The concept of Nash equilibrium is quite static and computationally challenging. Various
dynamical processes have been studied to model the process of learning, the discovering a
Nash equilibrium by an individual. A popular one is a system of ordinary differential
equations known as replicator equation that was introduced by Taylor and Jonker in
\cite{taylor1978evolutionary} (see also  \cite{schuster1983replicator,smith1982evolution}).

Consider a large population of individuals randomly matched in  pairs to play a two-player
symmetric game with pay-off matrix $A\in \R^{d\times d}$. The players are divided into $d$
groups according to the pure strategy they use. Let $p(t)=(p_1(t),\ldots,p_d(t))$, where
$p_i(t)$ is the proportion of individuals playing the $i$-th pure strategy $e_i$ at time $t$. We
want to write down an equation for $p_i'(t)$, $i=1,\ldots,d$, modelling the fact that
individuals playing a strategy with high fitness should be favored and they will produce more
offsprings than individuals playing a low fitness strategy.

In the case of the replicator equation, the fitness of an individual playing the $i$-th strategy
is defined as the difference between the expected pay-off received against a randomly
selected individual, and the expected pay-off received by a randomly selected individual
playing against another randomly selected individual. Thus, the fitness of the $i$-th strategy
$e_i$ is $e_i^TAp(t)-p(t)^TAp(t)$.  Notice that the fitness depends on the distribution of
strategies in the whole population and change in time. We then assume that agents  with
positive (respectively, negative) fitness has a reproductive advantage (resp., disadvantage)
leading to their reproduction (resp., death) at a rate proportional to their fitness. We thus
arrive at the replicator equations,
\begin{equation}\label{ReplicatorSystem}
  \frac{d}{dt}p_i=p_i(e_i^TAp-p^TAp) \qquad i=1,\ldots,d.
\end{equation}
It is easily shown that if $p(0)\in \Delta$, where $\Delta$, then $p(t)\in \Delta$ for all time
$t\ge 0$.

There is a strong connection between the rest point of the system  \eqref{ReplicatorSystem}
and the Nash equilibria of the game with pay-off matrix $A$ as stated in the so-called {\it Folk
Theorem}:

\begin{theorem}\label{FolkThm}
Let us consider a two player  symmetric  game in normal form with finitely many pure
strategies. Then it hold
\begin{enumerate}
\item if $(p,p)\in\Delta\times\Delta$ is a symmetric Nash equilibrium, then $p$ is a rest
    point of \eqref{ReplicatorSystem},
\item if $(p,p)$ is a strict Nash equilibrium, then it is an asymptotically stable  rest point of
    \eqref{ReplicatorSystem},
\item if the rest point $p$ is the limit as $t\to +\infty$ of a trajectory lying in the interior of $\Delta$ then $(p,p)$ is a Nash equilibrium,
\item if the rest point $p$ is stable then $(p,p)$ is a Nash equilibrium.
\end{enumerate}
Moreover, none of the converse statement holds.
\end{theorem}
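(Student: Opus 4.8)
The plan is to exploit the characterization of symmetric Nash equilibria in terms of the replicator fitness $f_i(x):=e_i^TAx-x^TAx$, so that \eqref{ReplicatorSystem} reads $\dot x_i=x_if_i(x)$. The starting point is the elementary observation that $(p,p)$ is a symmetric Nash equilibrium if and only if $e_i^TAp\le p^TAp$ for every $i$, with equality whenever $p_i>0$. Indeed $q\mapsto q^TAp$ is linear on $\Delta$, so $\max_{q\in\Delta}q^TAp=\max_i e_i^TAp$, and since $p^TAp=\sum_i p_i(e_i^TAp)$ is a convex combination this maximum is attained at $p$ exactly when $p$ charges only maximizing vertices. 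Item (1) is then immediate: if $(p,p)$ is Nash, then for each $i$ either $p_i=0$ or $f_i(p)=0$, hence $p_if_i(p)=0$ and $p$ is a rest point.

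Items (3) and (4) I would both derive from a single logarithmic escape argument. Assume $(p,p)$ is not Nash, so $f_i(p)=\delta>0$ for some $i$. For item (3), if $x(t)\to p$ along an interior trajectory then $\frac{d}{dt}\ln x_i(t)=f_i(x(t))>\delta/2$ for large $t$ by continuity, forcing $\ln x_i(t)\to+\infty$ and contradicting $x_i\le 1$. For item (4), since $p$ is a rest point with $f_i(p)\ne 0$ necessarily $p_i=0$; picking an initial point near $p$ with $x_i(0)>0$, the same bound $\frac{d}{dt}\ln x_i>\delta/2$ holds as long as the trajectory stays in a small neighborhood on which $f_i>\delta/2$, and it pushes $x_i$ past the (small) bound that $x_i$ satisfies there, so the trajectory must exit the neighborhood -- contradicting stability.

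For item (2) I would first record that in a symmetric game every strict Nash equilibrium is a pure strategy $e_k$ with $a_{kk}>a_{ik}$ for all $i\ne k$: if the support contained two indices one could perturb $p$ within it without changing $q^TAp$, violating strictness. Asymptotic stability then follows from the relative-entropy Lyapunov function $H(x)=-\ln x_k$, whose derivative along the flow is $\dot H=-f_k(x)=-(p-x)^TAx$; expanding near $e_k$ gives $(p-x)^TAx=\sum_{i\ne k}(a_{kk}-a_{ik})x_i+O(\|x-e_k\|^2)$, which is strictly positive for $x\ne e_k$ close to $e_k$, so $H$ is a strict local Lyapunov function at $e_k$. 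Equivalently, one may linearize and read off the transverse eigenvalues $a_{ik}-a_{kk}<0$.

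Finally, the failure of all four converses I would exhibit by explicit examples drawn mostly from the generalized Rock--Paper--Scissor family \eqref{matrizRPS}. A vertex $e_k$ is always a rest point but is generally not Nash (e.g.\ $e_1$ in \eqref{matrizRPS}, beaten by $e_2$), which refutes the converse of (1). The center $p^*=(1/3,1/3,1/3)$ is always a non-strict symmetric Nash equilibrium, and the computation $\frac{d}{dt}(x_1x_2x_3)=\tfrac12(b-a)\,[\,3\sum_i x_i^2-1\,]\,x_1x_2x_3$ shows that $p^*$ is asymptotically stable for $b>a$ (refuting the converse of (2), as $p^*$ is mixed hence not strict), is surrounded by periodic orbits for $a=b$ (refuting the converse of (3), since no interior trajectory converges to $p^*$), and is repelling for $a>b$ (refuting the converse of (4)). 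I expect item (2) -- establishing asymptotic rather than mere Lyapunov stability via the Lyapunov computation or the linearization -- together with the verification of the counterexamples to be the most delicate points, the remaining items reducing to the fitness characterization and the logarithmic estimate.
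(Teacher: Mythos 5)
Your proof is correct; note that the paper does not actually prove Theorem \ref{FolkThm} but refers to the surveys of Hofbauer--Sigmund, and your argument (support characterization of Nash equilibria for (1), the logarithmic escape estimate for (3)--(4), the reduction of strict equilibria to vertices plus the Lyapunov function $-\ln x_k$ for (2), and the $\frac{d}{dt}(x_1x_2x_3)$ computation in the Rock--Paper--Scissor family for the failed converses) is precisely the standard proof found there. The only cosmetic caveat is in the converse of (3): the constant trajectory at $p^*$ trivially converges to $p^*$, so the counterexample should be read as saying that no \emph{non-stationary} interior orbit converges to the Nash equilibrium when $a=b$.
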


We refer to the surveys \cite{hofbauer2003evolutionary,mikekisz2008evolutionary})  for a
proof and related results on the replicator equations.

\subsection{Preliminaries on probability measures and transport equations. }

We denote by $\mathcal{M}(\sx)$ the space of bounded Borel measures on $\sx$ and by $\mathcal{P}(\sx)$ the convex
cone of probability measures on  $\sx$.
We denote by $\|.\|_{TV}$ the total variation norm on $\mathcal{M}(\sx)$ defined as
$$ \|\mu\|_{TV} = \sup_{\|\phi\|_\infty\le 1} \int_{\sx} \phi\,d\mu. $$
However, the total variation norm will be  too strong for our purpose and it will be more
convenient to work with the weak*-convergence. We say that a sequence $(\mu_n)_n\subset
\mathcal{P}(\sx)$ converges weak* to  $\mu\in \mathcal{P}(\sx)$ if
$$ \int_\sx \phi\,d\mu_n \to \int_\sx \phi\,d\mu \qquad \text{for any $\phi\in C(\sx)$. }$$
It is well-known that, since $\Delta$ is compact, that $\mathcal{P}(\sx)$ is compact for the
weak* topology. The weak*-topology can be metricized in many ways. It will be convenient to
consider the Monge-Kantorovich or Wasserstein distance on $\mathcal{P}(\sx)$ defined as
$$W_1(u,v):=\sup \, \left(\int_\sx \varphi(p)\,du(p) - \int_\sx \varphi(p)\,dv(p)\right),   $$
where the supremum is taken over all the Lipschitz functions $\varphi$ with Lipschitz
constant $Lip(\varphi)\leq 1$. It is  known that $W_1$ is indeed a distance that metricizes
the weak*-topology, see  \cite{villanioptimal}.

We will work in this paper with first order partial differential equations of the form
\begin{equation}\label{TransportEq}
 \p_t \mu_t + \text{div}(v(x)\mu_t) = 0 \qquad \text{in $\R^d$}
\end{equation}
with a given initial condition $\mu_0\in P(\R^d)$ and
where $v:\R^d\to \R^d$ is a given vector-field usually assumed to be bounded and globally Lipschitz. A solution to this equation is $\mu\in C([0,+\infty),P(\R^d))$ satisfying
$$ \int_{\R^d} \phi\,d\mu_t =  \int_{\R^d} \phi\,d\mu_0
+ \int_0^t \int_{\R^d} v(x)\nabla\phi(x)\,d\mu_s(x)ds \qquad \text{for any $\phi\in C^\infty_c(\R^d)$.}$$
Let $T_t:\R^d\to \R^d$ be the flow of $v$ defined for any $x\in\R^d$ by
\begin{eqnarray*}
\frac{d}{dt} T_t(x) & =& v(T_t(x)) \qquad \text{for any $t\in\mathbb{R}$,} \\
\qquad T_{t=0}(x)& =& x.
\end{eqnarray*}
It is well-known (see e.g. \cite{villanioptimal}) that equation \eqref{TransportEq} has a
unique solution given by $ \mu_t = T_t\sharp \mu_0$, the push-forward of $\mu_0$ by
$T_t$. This means that $\int_{\R^d} \phi\,d\mu_t= \int_{\R^d} \phi(T_t(x))\,d\mu_0(x)$ for
any $\phi$ bounded and measurable. This result, which is simply a restatement of the
standard characteristic method, can be generalized to deal with equations with a
non-autonomous vector-field $v(t,x)$ assumed to be continuous and globally Lipschitz in $x$,
uniformly in $t$.

\section{Description of the model}

We consider a population of agents. Two randomly selected agents meet, they play a game,
and then update their strategy taking into account the outcome of the game. The game
played during an interaction is always the same. It is a two-player, zero-sum game with a set
$\{s_1,\ldots, s_d\}$ of pure strategies and whose pay-off is given by a matrix
$A=(a_{lm})_{1\le l,m\le d}\in\R^{d\times d}$. We will assume the game  is symmetric i.e.
$A^T=-A$, and with out loss of generality  we take $|a_{lm}|\le 1$ for any $l,m=1, \ldots, d$.

Each agent $i$ has a mixed strategy $p=(p_1,\dots,p_d)\in\sx$. Here $p_l$ is the probability
that agent $i$ plays the $l$-th pure strategy $s_l$.

When two agents $i$ and $j$ meet and  play the game,  they update their respective
strategies using a myopic rule, both agents  increase by $\delta h(p)>0$ the probability of
playing the winning strategy and decrease by $\delta  h(p)>0$  the loosing one. Here,
$\delta$ is a small positive parameter, and $ h(p)$ is a  positive function of $p$, to ensure
that the updated strategy $p^*$ remains in $\sx$. For instance, we can take
\begin{align}\label{defdelta}
\dm(p) := \min\Big\{ \prod_{i=1}^d p_i, c\Big\}.
\end{align}
where $c<1$ is a positive constant, and hence  $\dm(p)\to 0$ as $p\to \partial \sx$.

  More precisely, if the pure strategies $s_l$
and $s_m$ were played, agent $i$ only updates the probabilities $p_l$, $p_m$ to  $p^*_{l}$,
$p^*_m$ as follows
\begin{equation}\label{UpDateProba1}
\begin {split}
  p^*_l & =
	     p_l+a_{lm}\delta h(p)   \\
  p^*_m  & =	
	     p_m -a_{lm}\delta h(p),
\end{split}
\end{equation}
Agent $j$ updates the probabilities $\tilde p_l$, $\tilde p_m$ in the same way. Notice that
probabilities are raised/lowered proportionally to the gain/loss $a_{lm}\delta$.

To model the choice made by agent $i$ of which pure strategy to play, we fix a  random
variable  $\zeta$  uniformly distributed in $[0,1]$ and then consider  the random vector $
f(\zeta; p) = ( f_1(\zeta; p),\ldots, f_d(\zeta; p))$ where
\begin{equation}\label{definicionf}
   f_i(\zeta; p):= \begin{cases}
  1&\text{ if } \sum_{j<i}p_j\leq \zeta <\sum_{j\leq i}p_j,\\
  0 & \text{  otherwise.}
\end{cases}
\end{equation}
Notice  that $f_i(\zeta; p)=1$ with probability $p_i$.
Agent $j$ fixes in the same way  a  random variable  $\tilde\zeta$  uniformly distributed in $[0,1]$.
Then $f(\zeta,p)^TA f(\tilde \zeta,\tilde p)\in [-1,1]$ is the  pay-off of $i$ when playing against $j$ (recall that the coefficient of $A$ belongs to $[-1,1]$).
We can thus rewrite the updating rule \eqref{UpDateProba1} as
\begin{equation*}
p_i^*=
  \begin{cases}
  p_i+\delta \dm(p) f(\zeta,p)^TA f(\tilde \zeta,\tilde  p)& \text{ if }  f_i(\zeta,p)=1 \text{ and } f_i(\tilde \zeta,\tilde p)=0,\\
   p_i- \delta \dm(p) f(\zeta,p)^TA f(\tilde \zeta,\tilde p) &\text{ if } f_i(\zeta,p)=0 \text{ and } f_i(\tilde \zeta,\tilde p)=1,\\
    p_i   & \text{ otherwise }
\end{cases}
\end{equation*}

We can also add a small noise to $p_i^*$
in the following way.  We fix $r>0$ small enough so that $(\delta+r)<1$, and a smooth function
 $G $ such that $G(p)\leq p_i$ for any  $p\in\sx$ and any $i$ like e.g.  $G(p)=\dm(p)$.
We then consider a uniformly distributed random vector $q$ in $ \sx$.
The additive random noise is then taken as $r(q_i-1/d)G(p)$.

\medskip

We thus arrive at the following  interaction rule:

\begin{defi} Consider an agent with strategy $p$ interacting with an agent with strategy $\tilde p$ through
the game defined  by the matrix $A$. They update their strategies from $p$ to $p^*$, and
$\tilde p $ to $\tilde p^*$, as follows
  \begin{equation}\label{interaccion}
\begin{split}
 & p^*= p+ \delta \dm(p) f(\zeta,p)^TA f(\tilde \zeta,\tilde p) \Big(f(\zeta,p)-f(\tilde \zeta,\tilde p)\Big)  +  r(q -\vec{1}/d )G(p),\\
 &  \tilde p^*= \tilde p+ \delta \dm(\tilde p) f(\zeta,p)^TA f(\tilde \zeta,\tilde p) \Big(f(\zeta,p)-f(\tilde \zeta,\tilde p)\Big)
                     +  r(\tilde q - \vec{1}/d )G(\tilde p),
   \end{split}
\end{equation}
where $\vec{1}=(1,\ldots,1)\in \Rd$.
\end{defi}

Let us remark that $p^*$ and $\tilde p^*$ are random variables. Indeed there are two
sources of randomness in the  updating rule. First, there is the presence of the random
variables $\zeta$ and $\tilde \zeta$ which model the fact that the players choose the pure
strategy they are about to play at random using their mixed strategy $p,\tilde p$. The second
factor of randomness is the  noise  $r(q -\vec{1}/d )G(p)$.

Let us  verify now that $p^*$ remains   in the simplex $ \sx$.

\begin{lem}
The  strategy $p^*$ belongs to $\sx$.
\end{lem}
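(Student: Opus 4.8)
The plan is to verify the two defining properties of the simplex $\sx$ for the vector $p^*$ of \eqref{interaccion}: that its coordinates sum to one and that they are all non-negative (these two together automatically force each coordinate into $[0,1]$, so no separate upper bound is needed). First I would record the structure of the interaction term: since $f(\zeta,p)$ and $f(\tilde\zeta,\tilde p)$ are, by \eqref{definicionf}, canonical basis vectors — say $f(\zeta,p)=e_l$ and $f(\tilde\zeta,\tilde p)=e_m$, corresponding to the pure strategies actually drawn — we have $f(\zeta,p)^TAf(\tilde\zeta,\tilde p)=a_{lm}$ and $f(\zeta,p)-f(\tilde\zeta,\tilde p)=e_l-e_m$. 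Thus the interaction term equals $\delta\dm(p)a_{lm}(e_l-e_m)$ and modifies only the $l$-th and $m$-th coordinates. For the sum condition I would then note that $\sum_i(e_l-e_m)_i=0$ and, because $q\in\sx$, that $\sum_i(q_i-1/d)=1-1=0$; hence neither the interaction term nor the noise term contributes to $\sum_i p^*_i$, leaving $\sum_i p^*_i=\sum_i p_i=1$.

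The non-negativity of each coordinate is the heart of the matter, and here I would bound the worst-case size of each perturbation. For any index $i$, using $q_i\geq 0$ (so $q_i-1/d\geq -1/d$) together with $0\leq G(p)\leq p_i$, the noise contribution $r(q_i-1/d)G(p)$ is bounded below by $-\tfrac{r}{d}G(p)\geq -\tfrac{r}{d}p_i$. For coordinates $i\neq l,m$ there is no interaction term, whence $p^*_i\geq p_i(1-r/d)\geq 0$ since $r<1$. For the two active coordinates, bounding the interaction term from below by $-\delta\dm(p)|a_{lm}|\geq -\delta\dm(p)$ (recall $|a_{lm}|\leq 1$) yields $p^*_l\geq p_l-\delta\dm(p)-\tfrac{r}{d}p_l$, and symmetrically for $p^*_m$.

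The key quantitative fact I would then exploit is that the choice $\dm(p)=\min\{\prod_i p_i,c\}$ satisfies $\dm(p)\leq\prod_i p_i\leq p_l$ for every $l$, since each factor $p_j\leq 1$. Substituting $\dm(p)\leq p_l$ gives $p^*_l\geq p_l(1-\delta-r/d)$, and the standing assumption $\delta+r<1$ forces $1-\delta-r/d\geq 1-\delta-r>0$, so that $p^*_l\geq 0$; the same estimate disposes of $p^*_m$. I expect the only real obstacle to lie in assembling these worst-case bounds correctly — in particular, recognizing $\dm(p)\leq\min_i p_i$ and $G(p)\leq p_i$ as precisely the ingredients that, combined with $\delta+r<1$, keep every coordinate non-negative — rather than in any conceptual difficulty.
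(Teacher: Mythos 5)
Your proposal is correct and follows essentially the same route as the paper's proof: the coordinate sums of both the interaction term and the noise term vanish, and non-negativity follows from $\dm(p)\le\prod_j p_j\le p_i$, $G(p)\le p_i$ and $\delta+r<1$. The only (harmless) difference is that you obtain the upper bound $p_i^*\le 1$ for free from non-negativity plus $\sum_i p_i^*=1$, whereas the paper checks it directly via $\dm(p),G(p)\le 1-p_i$; both are fine.
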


\begin{proof} Starting from
$$
p^*= p+ \delta \dm(p) f(\zeta,p)^TA f(\tilde \zeta,\tilde p) \Big(f(\zeta,p)-f(\tilde \zeta,\tilde p)\Big)  +  r(q -\vec{1}/d )G(p),
$$
we have, for any $i=1,\dots,d$,
$$   p_i^*\le p_i+\delta \dm(p)+rG(p)\le p_i + (1-p_i)(\delta  +r) \le 1, $$
$$   p_i^*\geq p_i-\delta \dm(p)-rG(p)\geq p_i (1-\delta  -r) \geq 0. $$

To conclude the proof, let us show that $\sum_{i=1}^dp_i^*=1$. Since $f(\zeta,p)$ and
$f(\tilde \zeta,\tilde p)$ are vectors whose  components are  all equal to zero but one which
is equal to one, we have $$\sum_{i=1}^d \Big[f_i(\zeta,p)-f_i(\tilde \zeta,\tilde p)\Big]=0,$$
and, since $q\in \sx$, we have
$$
\sum_{i=1}^d (q_i -1/d)=0.
$$
 Then
\begin{eqnarray*}
  \sum_{i=1}^dp_i^*
 =  \sum_{i=1}^dp_i+\delta \dm f(\zeta)^TA f(\tilde \zeta) \Big(\sum_{i=1}^df_i(\zeta)-f_i(\tilde \zeta)\Big)+
      rG(p)\sum_{i=1}^d (q_i -1/d)
=  1.
\end{eqnarray*}
\end{proof}

\section{A Boltzmann-like equation and its grazing limit.  }

We now consider an infinite population of agents interacting through the game defined by
the matrix $A$ and updating after each interaction their strategies according to the rule
\eqref{interaccion}. We denote $u_t$ the distribution of agents in the simplex of mixed
strategies at time $t$. Notice that $u_t$ is thus a probability measure on $\sx$. Intuitively, if
this probability is regular enough, we can consider that $u_t$ is its density, and  $u_t(p)dp$
is roughly the proportion of agents whose strategy belongs to a neighborhood of volume
$dp$ around $p$.

\subsection{Boltzmann-type equation}
Let us find an equation describing the time evolution of $u_t$. However, since $u_t$ is a
measure, we only hope to find an equation in weak form, that is, for each observable
$\int_\sx \varphi(p)\, du_t(p)$, with $\varphi\in C(\sx,\R)$. Observe that this integral is the
mean value at time $t$ of some macroscopic quantity. For instance if $\varphi\equiv 1$ then
$\int_\sx \varphi(p)\, du_t(p)=u_t(\sx)$ is the total mass of $u_t$, which should be
conserved. If $\varphi(p)=p$ then $\int_\sx \varphi(p)\, du_t(p)=\int_\sx p\,du_t(p)$ is the
mean strategy in the population. We will see later that it is strongly related to the replicator
equation \eqref{ReplicatorSystem}.

We can also think of $u_t$ as the law of the stochastic process $P_t$ giving the strategy of
an arbitrary agent. Then $\int_\sx \varphi(p)\, du_t(p)=\E[\phi(P_t)]$ is the expected value of
$\phi$.

We assume that interactions take place following a unit rate Poisson process. Notice that if
the Poisson process has constant rate, we can always assume  that the rate is one up to a
linear time re-scaling. Then it is standard to show that
\begin{equation}\label{boltzmannconbeta}
  \frac{d}{dt} \int_\sx \varphi(p) \,du_t (p) =
\int_{ \sx^2} \E[\varphi(p^*)-\varphi(p)] \,   du_t(p)du_t(\tilde p),
\end{equation}
see the book \cite{pareschi2013interacting} for more details.

The following result states the existence and uniqueness of a solutions to this equation

\begin{theorem}\label{existenciaboltzmann}
For any initial condition  $u_0\in \mathcal{P}(\sx)$ there exists a unique
$u\in C([0,\infty),\mathcal{P}(\sx)) \cap C^1((0,\infty),\mathcal{M}(\sx)) $  satisfying
$$
\int_\sx \varphi(p) \, du_t(p) = \int_\sx \varphi(p) \, du_0(p)
+\int_0^t \int_{ \sx^2} \E[\varphi(p^*)-\varphi(p)] \,   du_s(p)du_s(\tilde p)ds
$$
for any test-function $\varphi\in C(\sx)$.
\end{theorem}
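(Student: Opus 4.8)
The plan is to recast the integral equation as a fixed-point problem and apply Banach's contraction principle on the space $C([0,T],\mathcal{P}(\sx))$ equipped with the Wasserstein metric $W_1$. First I would define, for a given curve $u\in C([0,T],\mathcal{P}(\sx))$, the collision operator
\begin{equation*}
\langle \mathcal{Q}[u_s], \varphi\rangle := \int_{\sx^2} \E[\varphi(p^*)-\varphi(p)]\,du_s(p)\,du_s(\tilde p),
\end{equation*}
and then introduce the map $\Gamma$ sending a curve $u$ to the curve $v$ defined weakly by $\langle v_t,\varphi\rangle = \langle u_0,\varphi\rangle + \int_0^t \langle \mathcal{Q}[u_s],\varphi\rangle\,ds$. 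A solution of the theorem is exactly a fixed point of $\Gamma$. The key structural fact that makes this work is that the interaction rule \eqref{interaccion} keeps $p^*\in\sx$ (the preceding Lemma), so masses stay probability measures; I also record that the Poisson rate is one, so no extra rate factor appears in $\mathcal{Q}$.

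The core estimate I would establish is a Lipschitz bound for $\mathcal{Q}$ in $W_1$: there is a constant $L$ depending only on $\|A\|$, $\delta$, $r$, and the Lipschitz constants of $\dm$ and $G$ such that for all $u,\bar u\in\mathcal{P}(\sx)$,
\begin{equation*}
\sup_{Lip(\varphi)\le 1}\,\big|\langle \mathcal{Q}[u]-\mathcal{Q}[\bar u],\varphi\rangle\big| \le L\, W_1(u,\bar u).
\end{equation*}
To get this I would, for a fixed Lipschitz $\varphi$, split the difference of the two double integrals into the two "legs" where one marginal is changed at a time, and use that $p\mapsto \E[\varphi(p^*)-\varphi(p)]$ is itself Lipschitz on $\sx$ uniformly in the second argument. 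The latter follows because $p^*$ depends on $p$ through the bounded smooth data $\dm(p)$, $G(p)$, and the payoff $f(\zeta,p)^TAf(\tilde\zeta,\tilde p)$, and because composing a $1$-Lipschitz $\varphi$ with a Lipschitz map is Lipschitz; one has to take the expectation over $\zeta,\tilde\zeta,q$, but since these are fixed reference variables independent of $p$, the expectation preserves the Lipschitz bound. With this in hand, $\|\mathcal{Q}[u]\|_{TV}$ is bounded (take $\bar u=\delta_{p_0}$ or bound directly), so $\Gamma$ maps $C([0,T],\mathcal{P}(\sx))$ into itself with continuous-in-time, indeed $C^1$, image.

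Once the Lipschitz bound holds, the contraction is routine: for $u,\bar u$ with $\Gamma u=v$, $\Gamma\bar u=\bar v$ one gets $W_1(v_t,\bar v_t)\le \int_0^t L\,W_1(u_s,\bar u_s)\,ds \le LT\,\sup_{s\le T}W_1(u_s,\bar u_s)$, so $\Gamma$ is a contraction for $T<1/L$, giving a unique local solution; since $L$ does not depend on the initial datum, I would then iterate on $[T,2T],[2T,3T],\dots$ to extend uniqueness and existence to all of $[0,\infty)$. The regularity claims follow a posteriori: $u\in C([0,\infty),\mathcal{P}(\sx))$ by construction, and since $s\mapsto \mathcal{Q}[u_s]$ is continuous into $\mathcal{M}(\sx)$, the defining integral equation shows $t\mapsto u_t$ is $C^1$ into $\mathcal{M}(\sx)$ with $\frac{d}{dt}u_t=\mathcal{Q}[u_t]$, recovering \eqref{boltzmannconbeta}.

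The main obstacle I anticipate is the Lipschitz estimate on $\mathcal{Q}$, and specifically handling the expectation over the discrete selection variables $\zeta,\tilde\zeta$. The map $p\mapsto f(\zeta,p)$ defined in \eqref{definicionf} is piecewise constant and \emph{discontinuous} in $p$ for each fixed $\zeta$, so one cannot differentiate under the expectation naively. The clean way around this is to integrate out $\zeta$ first: for fixed $p$, the law of $f(\zeta,p)$ is the categorical distribution with weights $p_i$, so $\E_{\zeta,\tilde\zeta}[\cdots]$ becomes an explicit finite sum $\sum_{l,m}p_l\tilde p_m(\cdots)$ whose $p$-dependence is now polynomial times the Lipschitz factors $\dm(p),G(p)$; the product structure $p_l\tilde p_m$ is smooth, so the resulting integrand is genuinely Lipschitz in $p$ and the estimate goes through. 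I would make sure to carry out this "average-then-estimate" reordering explicitly rather than attempting pathwise Lipschitz continuity, since that is where a careless argument would break down.
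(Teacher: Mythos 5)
Your overall strategy (fixed point for the integral form of the equation, local contraction, iteration in time, then reading off the $C^1$ regularity from the equation) is the same as the paper's, but there is one genuine gap at the step where you assert that $\Gamma$ maps $C([0,T],\mathcal{P}(\sx))$ into itself. Writing $\langle \mathcal{Q}[u],\varphi\rangle = \langle Q_+(u),\varphi\rangle - \langle u,\varphi\rangle$ with $\langle Q_+(u),\varphi\rangle := \int_{\sx^2}\E[\varphi(p^*)]\,du\,du$, the curve $\Gamma u$ is
$(\Gamma u)_t = u_0 + \int_0^t Q_+(u_s)\,ds - \int_0^t u_s\,ds$. The gain term $Q_+(u_s)$ is indeed a positive measure because $p^*\in\sx$, but the loss term $-\int_0^t u_s\,ds$ can create negative mass: for an admissible input curve $u$ whose support at times $s>0$ leaves the support of $u_0$ (nothing in the ansatz space forbids this), $(\Gamma u)_t$ assigns negative measure to sets charged by $u_s$ but not by $u_0$ or $Q_+(u_s)$. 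So $\Gamma u$ is only a curve of signed measures of total mass one, the space $(\mathcal{P}(\sx),W_1)$ is not invariant, and the contraction argument as stated does not close. The fact that the interaction rule keeps $p^*$ in $\sx$ controls only the gain term; it does not address this.

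The paper circumvents exactly this difficulty by splitting the proof in two. First it runs the Banach fixed point in the larger space $C([0,T],\mathcal{M}(\sx))$ with the total variation norm (on the ball $\|u_s\|_{TV}\le 2$), where invariance is not an issue and the bilinearity of $Q$ gives the contraction estimate $\|Q(u)-Q(v)\|_{TV}\le 2\|u+v\|_{TV}\|u-v\|_{TV}$ for free. Then, in a separate step following Cercignani, it proves a posteriori that the unique fixed point is a probability measure by rewriting the equation in Duhamel form $v_t=e^{-t}u_0+\int_0^t e^{s-t}Q_+(v_s)\,ds$ and constructing $v$ as the monotone limit of the iterates $v^{(n)}_t=e^{-t}u_0+\int_0^te^{s-t}Q_+(v^{(n-1)}_s)\,ds$, which are nonnegative and of unit mass by induction. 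If you want to keep your one-step scheme in $\mathcal{P}(\sx)$, the fix is to take the Duhamel form itself as the fixed-point map: it preserves positivity (only $Q_+$ appears) and total mass (since $\langle Q_+(u),1\rangle=1$ for $u\in\mathcal{P}(\sx)$), at the cost of a slightly different contraction computation. Your final remark about averaging out $\zeta,\tilde\zeta$ before estimating Lipschitz constants is sound and would be needed for a $W_1$-based contraction, but note that the paper avoids this entirely by working in total variation, where only boundedness of the integrand matters.
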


\noindent The proof of this result is classical and mainly based on Banach fixed-point
theorem. It can also be proved viewing \eqref{boltzmannconbeta} as an ordinary differential
equation in a Banach space following Bressan's insight \cite{bressan2005notes} (see also
\cite{alonso2016cauchy}). For the reader convenience we provide the main steps of the proof
in Appendix.

\subsection{Grazing limit}

We fix an initial condition  $u_0\in \mathcal{P}(\sx)$ and denote $u^\delta$ the solution of \eqref{boltzmannconbeta}
given by Theorem \ref{existenciaboltzmann} corresponding to interaction rules \eqref{interaccion}.
Notice that when  $\delta \simeq 0$, $|p^*-p|\simeq 0$ so that
 $\varphi(p^*)-\varphi(p)\simeq (p^*-p)\varphi'(p) + \frac12 (p^*-p)^2\varphi''(p)$.
Taking its expected value  we thus obtain
$$ \E[\varphi(p^*)-\varphi(p)]
\simeq \E[p^*-p]\varphi'(p) + \frac12 \E[(p^*-p)^2]\varphi''(p). $$
Using the rules \eqref{interaccion} to calculate the expectation and considering the new time scale $\tau = \delta t$,
we obtain that we can  approximate \eqref{boltzmannconbeta}
when $\delta \simeq 0$ by a local equation of the form
\begin{equation}\label{LimitEq}
 \p_\tau v+\text{div}(\F[v]\,v)=\frac{\lambda }{2} \sum_{i,j=1}^dQ_{ij}\partial_{ij}(G^2\, v).
\end{equation}
Here  $\lambda := r^2 /\delta$, the vector-field $\F[v]$ has components
\begin{equation}\label{defcampo}
\begin{split}
  \F_i[v_\tau ](p ) & = \sum_{k=1}^d \dm(p) a_{ik}(p_i\pp_k(\tau ) +p_k\pp_i(\tau )) \\
                   & = \dm(p) (p_i e_i^TA \pp(\tau) + \pp_i(\tau)e_i^T A p),
\end{split}
\end{equation}
being
$$\pp(\tau )=\int_\sx p\,dv_\tau (p)$$
the mean-strategy at time $\tau $,
and the diffusion coefficient $Q_{ij}$ is the covariance of the distribution of $\theta$, namely
\begin{equation}\label{defQ}
Q_{ij}:= \int_\sx (q_i -1/d )(q_j -1/d )\,d\theta(q).
\end{equation}

We say that $v\in C([0, \infty), \mathcal{P}(\sx))$ is a solution to equation \eqref{LimitEq} if
 \begin{equation}\label{DefWeakSol}
 \begin{split}
   & \int_\sx \varphi(p)\,dv_t(p) -  \int_\sx \varphi(p)\,du_0\\
   & \quad = \int_0^t \int_{\sx}\nabla\varphi(p)\cdot \mathcal{F}(p,s)\,dv_s(p)
     +\frac{\lambda}{2}\sum_{i,j=1}^d Q_{ij} \int_{\sx } \p_{ij}\varphi(p)  G^2(p)  \, dv_s(p) ds
 \end{split}
 \end{equation}
for any $\varphi\in C^2(\sx)$.

The above procedure is relatively well-known in the literature as grazing-limit. It has been
introduced in the context of socio and econophysics modelling by Toscani
\cite{toscani2006kinetic}, see also \cite{pareschi2013interacting}. It can be rigourously
justified to obtain the following Theorem:

\begin{theorem}\label{grazing}
Given an initial condition $u_0\in\mathcal{P}(\sx)$, let $u^\delta$  be the solution to
equation \eqref{boltzmannconbeta} given by Theorem \ref{existenciaboltzmann}
corresponding to interaction rule \eqref{interaccion}.

Assume that, as $\delta,r\to 0$, we have $r^2 /\delta\to\lambda$. Let $\tau=\delta t$ and
define $u^\delta_\tau:=u^\delta_t$. Then there exists  $v\in C([0, \infty), \mathcal{P}(\sx))$
such that, as $\delta\to 0$ up to a subsequence, $u_\delta \to v$ in
$C([0,T],\mathcal{P}(\sx))$ for any $T>0$. Moreover, $v$ is a weak solution to equation
\eqref{LimitEq} in the sense of \eqref{DefWeakSol}.

Eventually, if  $r^2/\delta^\alpha \to \lambda >0$ for some $\alpha\in(0,1)$, then re-scaling
time considering
 $\tau=\delta^\alpha t$, we obtain that  $u_\delta\to v $ as before with $v$ solution to
\begin{equation*}
 \frac{d}{d\tau}v=\frac{\lambda }{2} \sum_{i,j=1}^dQ_{ij}\partial_{ij}(G^2\, v) .
\end{equation*}
On the other hand, if  $r^2/\delta  \to 0$, then re-scaling time we obtain that  $u_\delta\to v
$  with $v$ solution to
\begin{equation}\label{LimitEqq}
\p_\tau v+\text{div}(\F[v]\,v)=0.
\end{equation}
\end{theorem}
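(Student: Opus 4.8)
The plan is to follow the classical grazing-limit strategy: Taylor-expand the test function inside the collision term of \eqref{boltzmannconbeta}, identify which first- and second-order contributions survive after the time rescaling $\tau=\delta t$, extract a convergent subsequence by compactness, and pass to the limit in the weak formulation to recover \eqref{DefWeakSol}. I would first record the two moment computations that drive the whole argument. Writing \eqref{interaccion} as $p^*-p=X+Y$ with drift $X=\delta\,h(p)\,f(\zeta,p)^TAf(\tilde\zeta,\tilde p)\big(f(\zeta,p)-f(\tilde\zeta,\tilde p)\big)$ and noise $Y=r\,(q-\vec{1}/d)\,G(p)$, and using that $f(\zeta,p)=e_l$ with probability $p_l$ independently of $f(\tilde\zeta,\tilde p)$, a direct computation (invoking $A^T=-A$) gives $\E[X_i]=\delta\,h(p)\big(p_i e_i^TA\tilde p+\tilde p_i e_i^TAp\big)$, so that integration against the second copy of the measure reproduces exactly $\delta\,\F_i$ from \eqref{defcampo}. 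Since $q$ is centered at $\vec{1}/d$ and independent of $\zeta,\tilde\zeta$, the noise satisfies $\E[Y]=0$; hence $\E[p^*-p]=\E[X]=O(\delta)$, the cross term $\E[XY^T]$ vanishes, and $\E[(p^*-p)(p^*-p)^T]=\E[XX^T]+\E[YY^T]=O(\delta^2)+r^2G(p)^2Q$ with $Q$ as in \eqref{defQ}.

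The compactness step is where I expect the main obstacle. One must show that the rescaled curves $\tau\mapsto u^\delta_\tau$ are equicontinuous in time, uniformly in $\delta$. A naive $W_1$ estimate fails: bounding $|\varphi(p^*)-\varphi(p)|$ by $\mathrm{Lip}(\varphi)\,|p^*-p|=O(\delta+r)$ and dividing by $\delta$ blows up, because $r/\delta\to\infty$ when $r^2/\delta\to\lambda$. The remedy is to exploit that the noise is centered: for $\varphi\in C^2(\sx)$ a second-order expansion yields $|\E[\varphi(p^*)-\varphi(p)]|\le\|\nabla\varphi\|_\infty|\E[p^*-p]|+\tfrac12\|D^2\varphi\|_\infty\,\E|p^*-p|^2\le C(\|\varphi\|_{C^2})(\delta+\delta^2+r^2)$, so that after division by $\delta$ the bound stays of order $1+\lambda$, uniformly in small $\delta$. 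I would therefore metrize the weak* topology of $\Prob(\sx)$ using a countable family of $C^2$ test functions dense in $C(\sx)$; equicontinuity for each such function, together with the automatic tightness on the compact set $\sx$, yields via Ascoli--Arzel\`a a subsequence converging in $C([0,T],\Prob(\sx))$ to some $v\in C([0,\infty),\Prob(\sx))$.

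It then remains to pass to the limit in the integrated, rescaled form of \eqref{boltzmannconbeta}. Inserting the second-order Taylor expansion with Lagrange remainder and dividing by $\delta$, the first-order part integrates against the second measure to $\nabla\varphi\cdot\F[u^\delta_s]$; the second-order part becomes $\tfrac12\sum_{ij}\partial_{ij}\varphi\,(O(\delta)+\tfrac{r^2}{\delta}G^2Q_{ij})$, whose $O(\delta)$ piece vanishes and whose remaining piece tends to $\tfrac\lambda2\sum_{ij}Q_{ij}\partial_{ij}\varphi\,G^2$; and the remainder, controlled by $\tfrac1\delta\E\big[|p^*-p|^2\,\omega_{D^2\varphi}(|p^*-p|)\big]\le C\,\omega_{D^2\varphi}(C(\delta+r))\,\tfrac{\delta^2+r^2}{\delta}\to0$, is negligible. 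Since every surviving integrand is continuous and bounded on $\sx$ or $\sx^2$, the uniform-in-$s$ convergence $u^\delta_s\to v_s$ (with $u^\delta_s\otimes u^\delta_s\to v_s\otimes v_s$ for the drift) combined with dominated convergence in $s$ lets me pass to the limit term by term, obtaining precisely \eqref{DefWeakSol}. The two remaining regimes follow from the same bookkeeping: rescaling by $\tau=\delta^\alpha t$ when $r^2/\delta^\alpha\to\lambda$ makes the drift carry a vanishing factor $\delta^{1-\alpha}$ and leaves only the diffusion, whereas $r^2/\delta\to0$ kills the diffusion and leaves the transport equation \eqref{LimitEqq}.
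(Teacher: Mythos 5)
Your proposal is correct and follows essentially the same route as the paper: Taylor expansion of the collision term, equicontinuity of $\tau\mapsto u^\delta_\tau$ in a metric built from smooth test functions, Arzel\`a--Ascoli with a diagonal argument, and term-by-term passage to the limit using that $\F[u^\delta_s]$ depends on $u^\delta_s$ only through its first moment. The only (harmless, and arguably cleaner) deviation is that you control the Taylor remainder via the modulus of continuity of $D^2\varphi$ and work with $C^2$ test functions throughout, which matches the regularity demanded in \eqref{DefWeakSol}, whereas the paper expands to third order, bounds the remainder by $C\|D^3\varphi\|_\infty(\delta^3+r^3)/\delta$, and correspondingly metrizes weak$^*$ convergence with the $C^3$-based norm $\|\cdot\|_{sup}$.
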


In the rest of the paper we will focus in this last case,  corresponding to the pure transport
equation  \eqref{LimitEqq}. Observe that this is a first order, nonlocal, mean field equation,
and following a classic strategy going back at least to Neunzert and Wik
\cite{neunzert1974approximation}, see also
\cite{braun1977vlasov,canizo2011well,dobrushin1979vlasov}, we can prove directly the
well-posedness of equation \eqref{LimitEqq}.

 Given $v\in C([0,+\infty],P(\sx))$ we denote $T_t^v$ the flow of the
vector field $\mathcal{F}[v(t)](x)$ namely
$$ \frac{d}{dt} T_t^v(x) = \mathcal{F}[v(t)](T_t^v(x)), \qquad T_{t=0}^v(x)=x. $$
It can be proved (see Appendix) that $T_t^v(x)\in\sx$ for any $t\ge 0$.
The result is the following:

\begin{theorem}\label{teotransporte}
For any initial condition $u_0\in P(\sx)$, equation \eqref{LimitEqq} has a unique solution $u$
in $C([0,+\infty],P(\sx))$. This solution  satisfies
 $u(t)=T_t^v\sharp u_0$ for any $t\ge 0$.

Moreover, the solutions depend continuously on the initial conditions. Indeed, there exists a
continuous function  $r:[0,+\infty)\to [0,+\infty)$ with $r(0)=1$ such that for any pair of
solutions $v^{(1)}$ and $v^{(2)} $ to equation \eqref{LimitEqq} there holds
\begin{equation*}
	W_1(v^{(1)}(t),v^{(2)}(t))\leq r(t) W_1(v^{(1)}(0),v^{(2)}(0)).
\end{equation*}
\end{theorem}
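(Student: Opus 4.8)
The plan is to exploit that the vector field $\F[v]$ depends on the measure $v$ only through its first moment $\pp=\int_\sx p\,dv$, which makes the nonlocality very mild, and then to run a Banach fixed-point argument in $C([0,T],\Prob(\sx))$ equipped with the sup-in-time Wasserstein distance $d_T(v,w):=\sup_{t\le T}W_1(v(t),w(t))$. First I would record the two structural properties of the field. Since $\dm$ is Lipschitz on the compact simplex and $p,\pp$ range in $\sx$ with $|a_{lm}|\le 1$, the map $p\mapsto\F[v](p)$ is bounded and Lipschitz on $\sx$ with a Lipschitz constant $L$ that is \emph{uniform in $v$} (because $\pp\in\sx$ is bounded); in particular boundedness makes $t\mapsto T_t^v(x)$ Lipschitz, so the candidate curves are $W_1$-continuous. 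Moreover, for fixed $p$ the dependence on the measure is controlled by the mean, $|\F[v](p)-\F[w](p)|\le C|\pp_v-\pp_w|$, and since each coordinate $p\mapsto p_i$ is $1$-Lipschitz, Kantorovich--Rubinstein duality gives $|\pp_v-\pp_w|\le C'W_1(v,w)$. These two facts are the only analytic ingredients; the tangency of $\F[v]$ to $\sx$ ensuring $T_t^v(x)\in\sx$ is the content of the Appendix and I would cite it.

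Next I would introduce the map $\Phi$ sending $w\in C([0,T],\Prob(\sx))$ to $\Phi(w)(t):=T_t^{w}\sharp u_0$, where $T_t^w$ is the flow of the \emph{frozen} non-autonomous field $\F[w(t)](\cdot)$. By the characteristics method for linear transport equations with continuous, globally Lipschitz-in-$x$ fields recalled in the preliminaries, $\Phi(w)$ is the unique weak solution of the linear equation with that field; consequently $v$ solves \eqref{LimitEqq} in the sense of \eqref{DefWeakSol} (with $\lambda=0$) if and only if it is a fixed point of $\Phi$. To compare $\Phi(v)$ and $\Phi(w)$ I would estimate the flows: writing the Duhamel identity for $T_t^v(x)-T_t^w(x)$, splitting $\F[v(s)](T_s^v x)-\F[w(s)](T_s^w x)$ into a measure part (bounded by $C'W_1(v(s),w(s))$) and a spatial part (bounded by $L|T_s^v x-T_s^w x|$) and applying Gr\"onwall yields $|T_t^v(x)-T_t^w(x)|\le C'e^{Lt}\int_0^t W_1(v(s),w(s))\,ds$. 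Since the diagonal coupling is admissible, $W_1(T_t^v\sharp u_0,T_t^w\sharp u_0)\le\int_\sx|T_t^v x-T_t^w x|\,du_0(x)$, whence $\sup_{s\le t}W_1(\Phi(v)(s),\Phi(w)(s))\le C''\int_0^t d_s(v,w)\,ds$. Iterating this Volterra-type inequality gives $d_T(\Phi^n(v),\Phi^n(w))\le\frac{(C''T)^n}{n!}\,d_T(v,w)$, so $\Phi^n$ is a contraction for $n$ large on every $[0,T]$; hence $\Phi$ has a unique fixed point on $C([0,T],\Prob(\sx))$ for all $T$. This furnishes global existence, uniqueness, and the representation $u(t)=T_t^v\sharp u_0$.

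For the stability estimate I would compare two genuine solutions $v^{(1)},v^{(2)}$ through the triangle inequality
$$
W_1(v^{(1)}(t),v^{(2)}(t))\le W_1\big(T_t^{v^{(1)}}\sharp v^{(1)}(0),T_t^{v^{(1)}}\sharp v^{(2)}(0)\big)+W_1\big(T_t^{v^{(1)}}\sharp v^{(2)}(0),T_t^{v^{(2)}}\sharp v^{(2)}(0)\big).
$$
The first term is at most $e^{Lt}W_1(v^{(1)}(0),v^{(2)}(0))$, using that Gr\"onwall on the flow makes $T_t^{v^{(1)}}$ an $e^{Lt}$-Lipschitz map; the second is at most $C'e^{Lt}\int_0^t W_1(v^{(1)}(s),v^{(2)}(s))\,ds$ by the flow estimate above. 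Setting $\phi(t):=W_1(v^{(1)}(t),v^{(2)}(t))$ and applying Gr\"onwall (bounding the increasing prefactor by its value at $t$) gives $\phi(t)\le r(t)\phi(0)$ with $r(t)=e^{Lt}\exp\big(\tfrac{C'}{L}(e^{Lt}-1)\big)$, which is continuous with $r(0)=1$, as claimed.

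The main obstacle, and what makes this more than a direct citation of linear transport theory, is precisely the self-consistency introduced by the nonlocal field: the whole argument hinges on converting the dependence of $\F$ on the unknown $v$ into a $W_1$-estimate, which is possible only because that dependence factors through the first moment $\pp$ and the coordinate maps are $1$-Lipschitz. Once this is in place, everything else is the classical Dobrushin-type stability scheme for mean-field transport equations.
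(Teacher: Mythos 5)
Your proposal is correct and follows essentially the same route as the paper, which reduces the problem to the fixed-point equation $v(t)=T_t^v\sharp v_0$ for the map $\Gamma(v)_t:=T_t^v\sharp v_0$ and invokes the Banach fixed-point theorem together with the Dobrushin-type $W_1$-stability scheme of C{a}\~nizo--Carrillo--Rosado; you merely fill in the flow comparison and Gr\"onwall estimates that the paper leaves to the cited reference. The only cosmetic difference is that you obtain a contraction of an iterate $\Phi^n$ on an arbitrary interval $[0,T]$, whereas the paper contracts $\Gamma$ itself on a short interval and then restarts.
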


The proofs of Theorems \ref{grazing} and \ref{teotransporte} can be found in the Appendix.

\section{Relationships between the mean-field equation, the replicator equations,  and the game. }

In this section we study the relationships between:
\begin{itemize}
	\item solutions $v\in C([0,+\infty],\Prob(\Delta))$  to
	the mean-field equation equation \eqref{LimitEqq}, or in weak form,
	\begin{equation}\label{TransportEqu}
	\frac{d}{dt} \int_\sx \phi\,dv_t = \int_\sx \F[v]\nabla\phi\,dv_t \qquad \text{for any $\phi\in C(\sx)$,}
	\end{equation}
	where  the vector-field $\F[v]$ is given by \eqref{defcampo},
	\item solutions to the replicator equations \eqref{ReplicatorSystem} $$	
\frac{d}{dt}p_i=p_i((Ap)_i-p^TAp) \qquad i=1,\ldots,d,
 $$
	\item Nash equilibria of the symmetric zero-sum game  with pay-off matrix $A$.
\end{itemize}

We first relate the stationary solution to the mean-field equation  \eqref{LimitEqq} of the
form $\delta_q$ with $q$ an interior point and the Nash equilibria of the game. Indeed, we
will prove that  $\delta_q$ is a stationary solution if and only if $q$ is a Nash equilibrium.

We then show that the mean strategy of the population satisfies the replicator equations.
Finally, we will study the case of a two-strategies game, where we can precisely describe
the asymptotic behavior of $v_t$, and then we show that, for generalizations of the
classical rock-paper-scissor where the trajectories of the replicator equations are closed
orbits, $v_t$ is also periodic.

\subsection{Nash equilibria and stationary solutions. }

Given some probability measure $v\in\Prob(\sx)$, we will slightly abuse notation considering  $v$ as the time-continuous function
from $[0,+\infty)$ to $\sx$ constantly equal to $v$.

\begin{defi}\label{definicionestransporte}
We say that $v\in \Prob(\sx)$ is an equilibrium or stationary solution of the transport
equation \eqref{TransportEqu} if it is a
 solution of  \eqref{TransportEqu}, that is, if
\begin{equation}\label{DeltaEquilibrium}
\int_\sx \F[v](p)\nabla\phi(p)\,dv(p) = 0 \qquad \text{for all $\phi\in C(\sx)$.}
\end{equation}
\end{defi}

We will mainly be interested in the case of equilibrium of the form $v=\delta_q$ for some
interior point $q\in\sx$.

\begin{theorem}\label{teoequilibrio}
 Let $q$ be an interior point of $\sx$. The following statements are equivalent:
\begin{enumerate}
  \item\label{equilibrioreplicador} $q$ is an equilibrium of the replicator equations \eqref{ReplicatorSystem},
  \item\label{equilibriomodelo} $\delta_q$ is an equilibrium of equation
      \eqref{TransportEqu}  in the sense of definition \ref{definicionestransporte},
  \item\label{equilibrioautovector} $q$ belongs to the null space of the matrix  $A$,
  \item\label{equilibrionash} $q$ is a Nash equilibrium of the symmetric zero-sum game with pay-off matrix $A$.
\end{enumerate}
\end{theorem}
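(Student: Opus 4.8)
The plan is to route all four equivalences through statement \eqref{equilibrioautovector}, the condition $Aq=0$, which is the most tractable to compare against. The whole argument rests on two features of the interior point $q$: first, antisymmetry of $A$ forces $q^TAq=0$, since $q^TAq=(q^TAq)^T=q^TA^Tq=-q^TAq$; second, interiority gives $q_i>0$ for every $i$ together with $\dm(q)>0$. I would record these two observations at the outset, as every implication below reduces to them.

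For \eqref{equilibrioreplicador}$\Leftrightarrow$\eqref{equilibrioautovector}, I would expand the replicator equilibrium condition $q_i(e_i^TAq-q^TAq)=0$ for all $i$. Using $q^TAq=0$ this collapses to $q_i(Aq)_i=0$, and since $q_i>0$ it holds for every $i$ if and only if $(Aq)_i=0$ for every $i$, i.e.\ $Aq=0$. For \eqref{equilibrioautovector}$\Leftrightarrow$\eqref{equilibrionash}, recall that $(q,q)$ is a symmetric Nash equilibrium iff $q^TAq\ge p^TAq$ for all $p\in\sx$, which by $q^TAq=0$ means $p^TAq\le 0$ for all $p\in\sx$ (the second Nash inequality giving the same condition by antisymmetry). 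If $Aq=0$ this is immediate. Conversely, testing against $p=e_i$ gives $(Aq)_i\le 0$ for every $i$; combined with $\sum_i q_i(Aq)_i=q^TAq=0$ and $q_i>0$, this forces $(Aq)_i=0$ for all $i$, hence $Aq=0$. Here interiority is essential: on the boundary one could have some $(Aq)_i<0$ compensated by $q_i=0$.

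The main step is \eqref{equilibriomodelo}$\Leftrightarrow$\eqref{equilibrioautovector}. For $v=\delta_q$ the mean strategy is $\pp=\int_\sx p\,d\delta_q=q$, so the field reads $\F_i[\delta_q](p)=\dm(p)\big(p_i e_i^TAq+q_i e_i^TAp\big)$, and the stationarity condition \eqref{DeltaEquilibrium} reduces to the single requirement $\F[\delta_q](q)\cdot\nabla\phi(q)=0$ for all admissible test functions $\phi$. I would first note that $\F[\delta_q]$ is tangent to $\sx$ along $\{\sum_ip_i=1\}$: indeed $\sum_i\F_i[\delta_q](q)=2\dm(q)\sum_iq_i(Aq)_i=2\dm(q)\,q^TAq=0$, which is precisely the fact (used in the Appendix) that the flow preserves $\sx$. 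Since the gradients $\nabla\phi(q)$ range over the full tangent space of $\sx$ as $\phi$ varies, the stationarity condition is equivalent to $\F[\delta_q](q)$ being orthogonal to that tangent space; being itself tangent, $\F[\delta_q](q)$ must then vanish. Evaluating finally gives $\F_i[\delta_q](q)=2\dm(q)\,q_i(Aq)_i$, and since $\dm(q)>0$ and $q_i>0$, we get $\F[\delta_q](q)=0$ if and only if $(Aq)_i=0$ for all $i$, i.e.\ $Aq=0$.

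I expect the only delicate point to be this reduction of \eqref{DeltaEquilibrium} for a Dirac mass to the pointwise identity $\F[\delta_q](q)=0$, specifically the bookkeeping of which directions the test gradients $\nabla\phi(q)$ can reach. The tangency computation $\sum_i\F_i[\delta_q](q)=0$ resolves this cleanly, but it must be phrased according to whether the test functions are taken on $\R^d$ (in which case $\nabla\phi(q)$ is arbitrary and $\F[\delta_q](q)=0$ follows at once) or intrinsically on $\sx$ (in which case the tangency of $\F$ is what closes the argument). Everything else is routine once $q^TAq=0$ and the strict positivity of $q_i$ and $\dm(q)$ are in hand.
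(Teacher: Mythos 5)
Your proposal is correct and follows essentially the same route as the paper's proof: the identities $q^TAq=0$, $\F_i[\delta_q](q)=2\dm(q)\,q_i(Aq)_i$, the strict positivity of $q_i$ and $\dm(q)$, and the testing of the Nash condition against the pure strategies $e_i$ are exactly the paper's ingredients, the only cosmetic difference being that you route all equivalences through $Aq=0$ while the paper uses the replicator equilibrium as the hub. Your extra care about whether the test gradients $\nabla\phi(q)$ span all of $\R^d$ or only the tangent space of $\sx$ (resolved by the tangency $\sum_i\F_i[\delta_q](q)=0$) is a welcome refinement of a point the paper handles by simply taking $\phi(p)=p_i$.
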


\begin{proof}
Let us first rewrite condition 	\eqref{DeltaEquilibrium} for $\delta_q$ to be an equilibrium of
equation \eqref{TransportEqu}. First notice that, for $v=\delta_q$, the mean strategy is
obviously $q$. Then from definition  \eqref{defcampo} of the vector-field $\F_i[\delta_q]$ we
have for any $p\in \sx$ and any time $t\ge 0$ that
$$ \F_i[\delta_q](p,t)= \sum_{k=1}^d \dm(p) a_{ik}(p_i q_k +p_kq_i). $$
In particular with $p=q$ we have
$$ \F_i[\delta_q](q,t)= \sum_{k=1}^d \dm(q) a_{ik}(q_i q_k +q_kq_i)
=2\dm(q)q_ie_i^tAq =2\dm(q)q_i(e_i^tAq-q^TAq) $$
where we used that $A$ is antisymmetric so that  $q^TAq=0$.
We can now rewrite condition \eqref{DeltaEquilibrium}.
Notice that
$$ \int_\sx \F[\delta_q]\nabla\phi\,d\delta_q =  \F[\delta_q](q)\nabla\phi(q)
 = 2\dm(q)\sum_{i=1}^d q_i(e_i^tAq-q^TAq) \p_i\phi(q). $$
Recall that $q$ is an interior point of $\sx$ so that $h(q)\neq 0$. We thus  obtain that
$\delta_q$ is an equilibrium of equation \eqref{TransportEqu} if and only if
\begin{equation}\label{DeltaEquilibrium2}
\sum_{i=1}^d q_i(e_i^tAq-q^TAq) \p_i\phi(q) = 0 \qquad \text{for all $\phi\in C(\sx)$.}
\end{equation}

We can now easily prove that statements (1) and (2) are equivalent. Indeed if $q$ is an
equilibrium of the replicator equations then $q_i((Aq)_i-q^TAq)=0$ for any $i=1,\ldots,d$ and
\eqref{DeltaEquilibrium2}  holds. On the other hand if $\delta_q$ is an equilibrium of the
equation \eqref{TransportEqu}, i.e. condition  \eqref{DeltaEquilibrium2} holds for any $\phi\in
C(\sx)$, then taking $\phi(p)=p_i$ we obtain $q_i(e_i^tAq-q^TAq)=0$ for any $i=1,\ldots,d$
so we get that $q$ is an equilibrium of the replicator equation.

We can also prove that (1) and (3) are equivalent. Indeed if (1) holds, then
$q_i(e_i^tAq-q^TAq)=0$ for any $i=1,\ldots,d$. Since $q^TAq=0$ and  $q_i>0$ for any
$i=1,\ldots,d$, being $q$ an interior point, the previous equality can be rewritten as
$e_i^tAq=0$ for any $i=1,\ldots,d$ i.e. $Aq=0$. This prove that (1) implies (3). On the other
hand, if $Aq=0$, then $e_i^tAq=0$ for any $i=1,\ldots,d$ so we obtain that
\eqref{DeltaEquilibrium2} holds.

It remains to show that $Aq=0$ if and only if $q$ is a Nash equilibrium.
Suppose that $Aq=0$. Then for any $p\in \sx$,
$$p^TAq=p\cdot \vec{0}=0=q^TAq. $$
Thus, playing any other strategy than $q$  against $q$ does not increase the pay-off. This
means that $q$ is a Nash equilibrium. Let us now assume that $q$ is a Nash equilibrium and
let us prove that $(Aq)_i=0$ for any $i=1,\ldots,d$. If $(Aq)_i>0$  then $e_i^{T}Aq>0=q^TAq$
contradicting that $q$ is a Nash equilibrium. If $(Aq)_i<0$ then recalling that $A$ is
antisymmetric,
 $$0=q^TAq=\sum_{k=1}^dq_k(Aq)_k=q_i(Aq)_i+\sum_{k\neq i}q_k(Aq)_k. $$
Since $q_k>0$  for any $k=1,\ldots,d$ there must exists some $l\in \{1,\ldots,d\}$ such that
$(Aq)_l>0$ and  this is not possible.

The proof is finished.
\end{proof}

\subsection{Evolution of the mean strategy and the replicator equations.}

The method of characteristic yields that the solution to equation  \eqref{TransportEqu} is
\begin{equation}\label{SolTransportEqu}
v_t = T_t\sharp v_0
\end{equation}
where $T_t$ is the flow of the vector-field $\F[v_t](p)$. This vector-field is the same as the
one in the replicator equations but with the mean-strategy $\pp(t)$ depending on the
distribution $v_t$ of strategies. The next result shows that $\pp(t)$ satisfies (up to a
constant) the replicator equations.

\begin{theorem}\label{dinamicapromedio}
	Consider a solution $v\in C([0,\infty),\Prob(\sx))$ of the transport equation
	\eqref{TransportEqu} staying away from the boundary $\p\sx$ of $\sx$ up to some time $T>0$ i.e.
	\begin{equation}\label{HipDinProm}
	dist\Big(supp(v_t),\p\sx\Big)\ge c^{1/d} \qquad 0\le t\le T,
	\end{equation}
	where $c$ is defined in \eqref{defdelta}.
	Then the mean strategy $\bar p(t) = \int_\sx p\,dv_t(p)$ is a solution of the replicator equation:
	\begin{eqnarray}\label{EDOMeanStrategy}
	\frac{d}{dt} \pp_i(t) =  2c \pp_i(t) e_i^T A\pp(t) \qquad i=1,\ldots,d.
	\end{eqnarray}	
\end{theorem}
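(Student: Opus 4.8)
The plan is to test the weak formulation \eqref{TransportEqu} against the coordinate functions $\phi(p)=p_i$, after first using the hypothesis \eqref{HipDinProm} to replace the position-dependent factor $\dm(p)$ by the constant $c$. The key preliminary reduction is the passage from the geometric hypothesis to the algebraic statement $\dm\equiv c$ on the support. With the natural convention $dist(p,\p\sx)=\min_i p_i$ on the interior of $\sx$ (the Euclidean distance to the facet $\{p_j=0\}\cap\sx$ equals $p_j$ up to the fixed dimensional factor $\sqrt{d/(d-1)}$, so it controls $\min_i p_i$ from below), condition \eqref{HipDinProm} says exactly that $\min_i p_i\ge c^{1/d}$ on $supp(v_t)$ for every $t\in[0,T]$. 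Hence $\prod_{i=1}^d p_i\ge (c^{1/d})^d=c$ there, and by the definition \eqref{defdelta} of $\dm$ we get $\dm(p)=\min\{\prod_i p_i,c\}=c$ on $supp(v_t)$ for all $t\in[0,T]$. On this region the vector field therefore reduces to $\F_i[v_t](p)=c\,(p_i\,e_i^TA\pp(t)+\pp_i(t)\,e_i^TAp)$; this is the crucial simplification.

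Next I would substitute $\phi(p)=p_i$ into \eqref{TransportEqu}, which is admissible since $p_i$ is smooth and $\nabla\phi=e_i$. For $0\le t\le T$ this gives
$$\frac{d}{dt}\pp_i(t)=\int_\sx \F_i[v_t](p)\,dv_t(p)=c\int_\sx \big(p_i\,e_i^TA\pp(t)+\pp_i(t)\,e_i^TAp\big)\,dv_t(p),$$
and the integrand splits into two pieces handled by linearity. In the first, $e_i^TA\pp(t)=(A\pp(t))_i$ is independent of $p$, so that term integrates to $(A\pp(t))_i\int_\sx p_i\,dv_t=\pp_i(t)\,e_i^TA\pp(t)$. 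In the second, $\pp_i(t)$ is constant in $p$ while $\int_\sx e_i^TAp\,dv_t=e_i^TA\int_\sx p\,dv_t=e_i^TA\pp(t)$, giving again $\pp_i(t)\,e_i^TA\pp(t)$. Adding the two contributions yields $\frac{d}{dt}\pp_i(t)=2c\,\pp_i(t)\,e_i^TA\pp(t)$, which is precisely \eqref{EDOMeanStrategy}. (Rigorously one works with the integral form of \eqref{TransportEqu} on $[0,t]$, performs the same splitting, and differentiates at the end, thereby avoiding any a priori differentiability assumption on $t\mapsto\pp(t)$.)

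I expect the only genuinely delicate point to be the first step, turning \eqref{HipDinProm} into $\dm\equiv c$ on $supp(v_t)$ uniformly in $t\in[0,T]$; once this is in hand, the remainder is linearity of the integral together with the antisymmetry of $A$. It is worth recording that antisymmetry also explains the precise form of \eqref{EDOMeanStrategy}: since $\pp^TA\pp=-\pp^TA\pp=0$, the replicator field $\pp_i((A\pp)_i-\pp^TA\pp)$ equals $\pp_i\,e_i^TA\pp$, so \eqref{EDOMeanStrategy} is exactly the replicator field, the factor $2c$ being a harmless time rescaling. A final minor check is that the $\R^d$-gradient may be used in place of the tangential one on $\sx$: because $\F$ is tangent to the simplex, as $\sum_i\F_i[v](p)=\dm(p)(p^TA\pp+\pp^TAp)=0$ by antisymmetry, replacing $e_i$ by its tangential projection $e_i-\vec{1}/d$ leaves $\F\cdot\nabla\phi=\F_i$ unchanged.
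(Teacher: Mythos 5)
Your proof is correct and follows essentially the same route as the paper's: the key step in both is that hypothesis \eqref{HipDinProm} forces $\dm\equiv c$ on $supp(v_t)$, after which the identity $\int_\sx\big(p_i e_i^TA\pp+\pp_i e_i^TAp\big)\,dv_t=2\pp_i e_i^TA\pp$ yields \eqref{EDOMeanStrategy}. The only cosmetic difference is that you test the weak formulation directly with $\phi(p)=p_i$, whereas the paper arrives at the same integral identity by differentiating the push-forward representation $\pp_i(t)=\int_\sx\T^i_t(p)\,dv_0(p)$ along the flow of $\F[v]$.
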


\noindent Notice that \eqref{EDOMeanStrategy} is not exactly the replicator systems due to the
constant $2c$, but becomes so after the time-scale change $\tau=2ct$.

\begin{proof}
	Let $\T_{s,t}$ be the flow of the vector-field $\F[v](t,x)$ i.e.
	$$ \frac{d}{dt}\T_{s,t}(p) = \F[v](\T_{s,t}(p),t), \qquad \T_{s,s}(p)=p. $$
	We also let $\T_t(p)=\T_{0,t}(p)$ and denote $\T^i_t(p)$, $i=1,\ldots,d$, its components.
	Then $v_t = \T_t\sharp v_0$. It follows that for any $i=1,\ldots,d$,
	\begin{eqnarray*}
		\pp_i(t) = \int_\sx p_i \, dv_t(p) = \int_\sx \T^i_t(p) \, dv_0(p),
	\end{eqnarray*}
	so that
	\begin{eqnarray*}
		\frac{d}{dt} \pp_i
		& = & \int_\sx \frac{d}{dt} \T^i_t(p) \, dv_0(p)
		= \int_\sx  \F_i[v](\T_{s,t}(p),t)\, dv_0(p) \\
		& = & \sum_{k=1}^d \int_\sx  \dm\Big(\T_t(q)\Big) a_{ik}\Big(\T^i_t(q)\pp_k(t) +\T^k_t(q)\pp_i(t)\Big)    \, dv_0(q) \\
		& = & \sum_{k=1}^d  \int_\sx  \dm(p) a_{ik} (p_i\pp_k(t) +p_k\pp_i(t) )    \, dv_t(p),
	\end{eqnarray*}	
	where we used once again that $v_t = \T_t\sharp v_0$.
	According to assumption \eqref{HipDinProm} we have for any $p$ in the support of $v_t$ that
	$p_i\ge c^{1/d}$ for $i=1,\ldots,d$, which implies that  $h(p)=c$. Thus,
	\begin{align*}
		\frac{1}{c}\frac{d}{dt} \pp_i(t)
		=& \sum_{k=1}^d  \int_\sx    a_{ik} (p_i\pp_k(t) +p_k\pp_i(t) )    \, dv_t(p)  \\
		= &  2\sum_{k=1}^d  a_{ik} \pp_i(t)\pp_k(t)
	\end{align*}	
	which is \eqref{EDOMeanStrategy}.
\end{proof}

\subsection{Two strategies games.}

Let us consider the case of a symmetric game with two strategies. The pay-off matrix $A$ is then
$$ A = \begin{pmatrix} 0 & b \\ -b & 0 \end{pmatrix} $$
for some $b\in \R$. Notice that if $b>0$ (resp. $b<0$) then the first (respectively, second) strategy
strictly dominates the other.
We thus expect that all agents end up playing the dominating strategy except those initially playing
exclusively the loosing strategy and they cannot move due to the presence of $h$ in the interaction rule.

Let $v_t$ be the solution to the transport equation \eqref{LimitEqq} and $\mu_t$ be the distribution of $p_1$ (i.e. $\mu_t$ is the first marginal of $v_t$.)
This means that $v_t(A\times [0,1])=\mu_t(A) $ for any Borel set $A\subset [0,1]$, which can be rewritten as
$$ \iint \phi(p_1) \,dv_t(p_1,p_2) = \int \phi(p_1)\,d\mu_t(p_1)  $$
for any measurable non-negative function $\phi:[0,1]\to\R$.

\begin{theorem}\label{Thm2strategies}
	Assume that $b>0$ and write the initial condition as
	$$ \mu_0 = (1-a)\delta_0+a\tilde \mu_0$$
	where $a\in [0,1]$  and $\tilde \mu$ is a probablity measure on $[0,1]$ such that $\tilde \mu_0(\{0\})=0$.
	Then
	$$\lim_{t\to +\infty}\mu_t = (1-a)\delta_0 + a\delta_1.$$
	
	In the same way, if $b<0$ and $\mu_0 = (1-a)\delta_1+a\tilde \mu_0$ where $\tilde \mu(\{1\})=0$ and
	$a\in [0,1]$, then $\mu_t\to (1-a)\delta_1+a\delta_0$ as $t\to +\infty$.
\end{theorem}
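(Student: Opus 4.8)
The plan is to reduce the full transport dynamics to the one–dimensional flow of the first coordinate. By Theorem \ref{teotransporte} the solution is $v_t=T_t^v\sharp v_0$, where $T_t^v$ is the flow of the time–dependent vector field $\F[v_t]$. Writing $p=(x,1-x)$ with $x=p_1\in[0,1]$ and $m(t):=\int_\sx p_1\,dv_t=\int_0^1 x\,d\mu_t(x)$ the mean of the first coordinate, I would first use \eqref{defcampo} and the special form of $A$ to compute
$$ \F_1[v_t](x,1-x) = b\,\dm(x,1-x)\big(x(1-m(t)) + m(t)(1-x)\big), \qquad \F_2=-\F_1. $$
Since the simplex constraint $p_2=1-p_1$ is preserved by the flow, the first coordinate obeys the scalar (autonomous given $m$) ODE $\dot x = V(x,t)$ with $V(x,t):=b\,\dm(x,1-x)\,(x+m(t)-2xm(t))$, and the marginal is the push-forward $\mu_t=X_t\sharp\mu_0$ by the scalar flow $X_t$ of $V$.

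Next I would record the qualitative features of $V$. Because $\dm(x,1-x)=\min\{x(1-x),c\}$ vanishes at $x\in\{0,1\}$, both endpoints are fixed points, so $X_t(0)=0$ and $X_t(1)=1$ for all $t$; by uniqueness of characteristics $X_t$ is a homeomorphism of $[0,1]$ preserving $\{0\}$, $(0,1)$ and $\{1\}$, whence the mass at $0$ never moves and $\mu_t(\{0\})=1-a$. A short computation shows $x+m-2xm=x(1-m)+m(1-x)>0$ for every $x\in[0,1]$ whenever $m\in(0,1)$ (it has no zero in $[0,1]$ for such $m$), so $V(x,t)>0$ on $(0,1)$ as soon as $m(t)>0$, and $x$ strictly increases along non-boundary characteristics. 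To control $m$ from below I would test \eqref{TransportEqu} with $\phi(p)=p_1$, obtaining $\frac{d}{dt}m(t)=\int_\sx \F_1[v_t]\,dv_t=\int_0^1 V(x,t)\,d\mu_t\ge 0$; since $\tilde\mu_0(\{0\})=0$ forces $m(0)=a\int_0^1 x\,d\tilde\mu_0>0$ when $a>0$, this gives $m(t)\ge m(0)=:m_0>0$ for all $t$.

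With these facts the convergence of the non-boundary mass follows. Fix $x_0\in(0,1)$; then $t\mapsto X_t(x_0)$ is increasing and bounded above by $1$, hence converges to some $L=L(x_0)\le 1$. The heart of the argument, and the step I expect to be the main obstacle, is to rule out $L<1$, i.e.\ to show the mass cannot stall at an interior point even though $V$ itself tends to $0$ as $x\to 1$. If $L<1$, then $X_t(x_0)\in[x_0,L]\subset(0,1)$ for all $t$, on which $\dm(x,1-x)\ge\eta>0$; moreover there $x+m(t)-2xm(t)\ge m(t)(1-x)\ge m_0(1-L)>0$. Consequently $V(X_t(x_0),t)\ge b\,\eta\,m_0(1-L)>0$ uniformly in $t$, so $X_t(x_0)\ge x_0+b\,\eta\,m_0(1-L)\,t\to\infty$, contradicting $X_t(x_0)\le L$. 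Therefore $L=1$, that is $X_t(x_0)\to 1$ for every $x_0\in(0,1]$.

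Finally I would pass to the limit in the push-forward representation. For any $\psi\in C([0,1])$,
$$ \int_0^1 \psi\,d\mu_t = \int_0^1 \psi(X_t(x))\,d\mu_0(x), $$
and since $X_t(0)=0$ while $X_t(x)\to 1$ for $x\in(0,1]$, dominated convergence (with $\psi$ bounded) gives $\int_0^1 \psi\,d\mu_t \to (1-a)\psi(0)+a\psi(1)$, i.e.\ $\mu_t\to(1-a)\delta_0+a\delta_1$. The case $b<0$ is identical after interchanging the two coordinates, and the degenerate cases $a=0$ (where $\mu_0=\delta_0$ is stationary) and $a=1$ are trivial. The only genuinely analytic content is the uniform-in-time lower bound on the velocity in the third paragraph; the rest is bookkeeping on the scalar characteristic flow.
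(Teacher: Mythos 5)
Your proposal is correct, and it follows the paper's reduction to the one--dimensional marginal equation and the monotonicity of the characteristics, but it diverges at the decisive step. The paper argues softly: since $v[\mu_t]\ge 0$, each characteristic $T_t(x)$ increases to some limit $T_\infty(x)$, so $\mu_t\to\mu_\infty$; it then asserts $v[\mu_\infty]=0$ $\mu_\infty$-a.e.\ and invokes a separate lemma (its \eqref{Claim}) characterizing the measures annihilating the velocity field as convex combinations of $\delta_0$ and $\delta_1$ --- in particular it must explicitly rule out an atom sitting at the limiting mean $\bar p_1$ --- and finally recovers the weight $1-a$ by a two-sided test-function argument. You instead prove a quantitative statement: the mean $m(t)$ is nondecreasing hence bounded below by $m_0>0$, which gives the uniform lower bound $V(x,t)\ge b\,\eta\,m_0(1-L)$ on any compact subinterval $[x_0,L]\subset(0,1)$, forcing every interior characteristic to reach $1$; the limit measure and its weights then come out in one stroke from dominated convergence in the push-forward formula. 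Your route is more elementary and self-contained: it bypasses both the a.e.\ vanishing claim (which the paper states without justification and is the least transparent point of its proof) and the case analysis in \eqref{Claim}, at the cost of being tied to the specific monotone structure of the two-strategy field, whereas the paper's LaSalle-type identification of $\omega$-limits is the pattern one would try to generalize to more strategies. Both arguments are sound; the only point to keep explicit in yours is that $\eta$ may be taken as $\min\{x_0(1-x_0),L(1-L),c\}>0$ by concavity of $x(1-x)$ on $[x_0,L]$, and that the case $a=0$ is handled separately since then $m_0=0$.
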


\begin{proof}	
	Let us assume that $b>0$ (the proof when $b<0$ is completely analogous).
		It follows from \eqref{LimitEqq} that $\mu_t$ satisfies the following equation: for any $\phi\in C^1([0,1])$,
	\begin{equation}\label{Equp1}
	 \frac{1}{b} \frac{d}{dt} \int_0^1 \phi\,d\mu_t
	= \int_0^1	\phi'(p_1) v[\mu_t](p_1) \,d\mu_t(p_1),
	\end{equation}
	where for any probability measure $\mu$ on $[0,1]$ the vector-field $v[\mu]$ is defined as
	$$  v[\mu](p_1) = \underbrace{\min\{p_1(1-p_1),c\}}_{h(p_1)} (p_1+\bar p_1 - 2p_1\bar p_1),$$
where $$ \bar p_1 = \int_0^1 p_1\,d\mu(p_1). $$
	
	Notice first that since $p_1,\bar p_1\in [0,1]$ we have
	\begin{equation}\label{Claim2}
	v[\mu](p_1) \ge h(p_1)(p_1^2+\bar p_1^2 - 2p_1\bar p_1) = h(p_1) (p_1-\bar p_1)^2.
	\end{equation}
	Hence, it follows that $v[\mu]\ge 0$. Another consequence is that
	\begin{equation}\label{Claim}
	v[\mu] = 0 \text{  $\mu$-a.e.} \qquad \Leftrightarrow \qquad \mu=\alpha\delta_0 + (1-\alpha)\delta_1,\,\alpha \in [0,1].
	\end{equation}
	Indeed, if $v[\mu](p)=0$ then $p_1=0,1,\bar p_1$ by \eqref{Claim2}.
	Thus if $v[\mu] = 0$ $\mu$-a.e. then $\mu = \alpha \delta_0+\beta \delta_1 + \gamma\delta_c$ for some $c\in (0,1)$
	and $\alpha,\beta,\gamma\ge 0$, $\alpha+\beta+\gamma=1$. Since $h(c)\neq 0$,
$v[\mu](c)=0$ gives $c=\bar p_1$ by \eqref{Claim2} and then $0=v[\mu](c)=h(c)(c+c-2c^2)$
i.e. $c=0$ or $c=1$ which is an absurd. 	

	Let us recall that $\mu_t=T_t\sharp\mu_0$
where $T_t$ is the flow of $v[\mu_t](p_1)$, and also that $v[\mu_t](p_1)\ge 0$. Thus for
any $x$ in the support of $\mu_0$, $T_t(x)$ is non-decreasing and bounded by 1 and thus
converge to some $T_\infty(x)$. Then $\mu_t\to \mu_\infty:=T_\infty\sharp\mu_0$.

Moroever, $v[\mu_\infty]=0$ $\mu_\infty$-a.e. and thus
 $\mu_\infty=(1-\alpha)\delta_0+\alpha\delta_1$ for some $\alpha\in [0,1]$ by
 \eqref{Claim}.

	To conclude, we have to show that
	\begin{equation}\label{Claim3}
	\mu_\infty(\{0\})=\mu_0(\{0\}).
	\end{equation}

Let us take a smooth non-increasing function $\phi:[0,1]\to [0,1]$
	such that $\phi=1$ in $[0,1/n]$ and $\phi=0$ in $[2/n,1]$, $n\in \mathbb{N}$. Then
	$$ \int_0^1 \phi\,d\mu_t - \int_0^1 \phi\,d\mu_0 = \int_0^t\int_0^1 \phi'(p_1)v[\mu_s](p_1)\,d\mu_s(p_1)ds \le 0 $$
	Letting $t=t_k\to +\infty$ we obtain
	$$ \int_0^1 \phi\,d\mu_\infty \le \int_0^1 \phi\,d\mu_0 $$
	and then  $ \mu_\infty([0,1/n])\le \mu_0([0,2/n])$. Letting $n\to +\infty$ gives $\mu_\infty(\{0\})\le \mu_0(\{0\})$.
	To prove the converse inequality recall that $\mu_t=T_t\sharp\mu_0$ where $T_t$ is the flow of $v[\mu_t](p_1)$.
	For any  $\phi\in C([0,1])$ we thus have
	$$ \int_0^1 \phi\,d\mu_t = \int_0^1 \phi(T_t(p_1))\,d\mu_0(p_1)
	= (1-a)\phi(0) + a \int_0^1 \phi(T_t(p_1))\,d\tilde \mu_0(p_1)  $$
	Letting $t=t_k\to +\infty$ we obtain $ \int_0^1 \phi\,d\mu_\infty\ge (1-a)\phi(0)$ for any nonnegative and continuous function $\phi$.  We deduce that $\mu_\infty(\{0\})\ge 1-a$.
	This proves \eqref{Claim3}.

We conclude that $\mu_t \to (1-a)\delta_0 + a\delta_1$, and this finishes the proof.
	
\end{proof}

Given an initial condition $\mu_0$, the distribution $\mu_t$ of $p_1$ is the unique solution (see \eqref{Equp1})  of
$$ \frac{1}{b} \p_t\mu_t + \p_{p_1}\Big( v[\mu_t](p_1)   \Big) = 0.$$

In particular if $\mu_0$ is a convex combination of Dirac masses like e.g. $\mu_0 = \frac1N
\sum_{i=1}^N \delta_{p_1^i(0)}$ with $p_1^1(0),\ldots,p_1^N(0)\in [0,1]$, then $\mu_t =
\frac1N \sum_{i=1}^N \delta_{p_1^i(t)}$ where $p_1^1(t),\ldots,p_1^N(t)$ are the solutions
of the system
\begin{equation} \label{Syst2strategies}
\begin{split}
 \frac{d}{dt} p_1^i(t)
 & =   v[\mu](p_1^i(t)) \\
 & =  \min\{p_1^i(1-p_1^i),c\} (p_1^i(t)+\bar p_1 - 2p_1^i(t)\bar p_1)
 \qquad i=1,\ldots,N,
\end{split}
\end{equation}
where
$$ \bar p_1 =  \frac1N \sum_{i=1}^N  p_1^i(t). $$

We solved numerically the system \eqref{Syst2strategies} in the time interval $[0,T]$ using a Runge-Kutta scheme of order 4 with step size $h=0.1$ with the following parameters values:
\begin{equation}\label{2strategies_Param}
 b=1, \qquad c=0.1, \qquad  N=1000, \qquad  T=400,
\end{equation}
and taking as initial condition
\begin{equation}\label{2strategies_CondIni}
\begin{split}
&  p_1^1(0)=...=p_1^{300}(0)=0,  \\
& \text{$p_1^k(0)$, $k=301,\ldots,N$,  uniformly and independently distributed in $[0,0.3]$. }
\end{split}
\end{equation}
We show in figure \ref{Fig_2strategies} the resulting evolution of the distributions of the
$p_1^k(t)$, $k=1,\ldots,N$. We can see that $p_1^1(t)=...=p_1^{300}(t)=0$ for any $t$,
resulting in the Dirac mass $\frac{3}{10}\delta_0$. The others $p_1^k (t)$,
$k=301,\ldots,N$, are moving to the right until reaching 1 thus building up progressively the
Dirac mass $\frac{7}{10}\delta_1$ in complete agreement with Theorem
\ref{Thm2strategies}.

\begin{figure}\label{Fig_2strategies}	
	\centering
	\includegraphics[width=\textwidth]{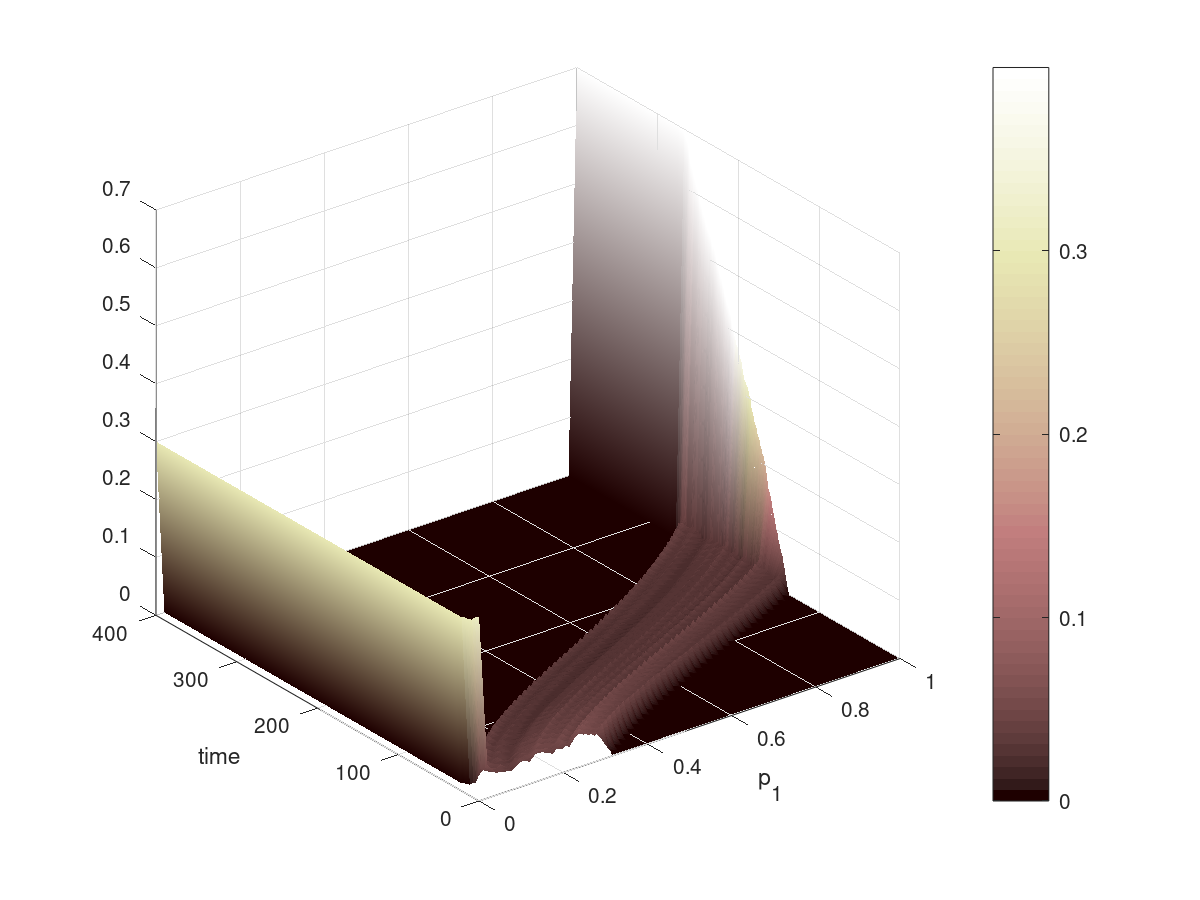}
	\caption{Time evolution of the distribution of $p^1_1,\ldots,p^N_1$ solutions of \eqref{Syst2strategies}
		with parameter values \eqref{2strategies_Param} and initial condition \eqref{2strategies_CondIni}.}
\end{figure}

\subsection{Periodic solutions for Paper-Rock-Scissor like games.}

We now consider the case of a game for which the solutions of the replicator equations
are periodic orbits. We have in mind the classic Rock-Paper-Scissor game whose pay-off matrix is
$$ A = \begin{pmatrix} 0 & 1 & -1 \\ -1 & 0 & 1 \\ 1 & -1 & 0 \end{pmatrix} $$
In this game strategies dominate each other in a cyclic way as $1\to 2\to 3\to 1$.
This can be generalized to a game with an odd number $d$ of strategies considering the pay-off matrix
\begin{equation}\label{PayOffPPT}
 A:= \begin{pmatrix}
	0       & a_1 &  a_2 & \cdots & \cdots & a_{d-1} \\
	a_{d-1} & 0   & a_1 &  a_2 & \cdots & a_{d-2} \\
	a_{d-2} & a_{d-1} & 0 &  a_1 & \cdots & a_{d-3} \\
	\cdots & \cdots & \cdots & \cdots & \cdots & \cdots \\
	a_1 &  a_2 & \cdots & \cdots & a_{d-1} & 0
\end{pmatrix}
\end{equation}
with $a_k=(-1)^{k-1}$, $k=1,\ldots,d-1$.

It is known that interior trajectories for the replicator equations for these games are closed
periodic orbits enclosing the unique interior  Nash equilibrium $N:=\frac{1}{d}\vec{1}$ (see
e.g. \cite{sandholm2010population}). In particular
\begin{equation}\label{Value}
N^T A N = 0.
\end{equation}
Moreover (see Theorem  7.6.4 in \cite{hofbauer1998evolutionary}) if $p(t)$ is such a
trajectory then its temporal mean converges to $N$:
$$ lim_{t\to +\infty} \frac{1}{t}\int_0^t p(s)\,ds = N.  $$
It follows that
\begin{equation} \label{TempMean}
 \frac{1}{T}\int_0^T p(s)\,ds = N
\end{equation}
where $T$ is the period of the trajectory $x(t)$.

Our next result states that the solutions of the mean-field equation equation \eqref{LimitEqq} lying in $\{h=c\}$ are also periodic. 

\begin{theorem}
Consider a solution $v_t$  to equation \eqref{LimitEqq} with initial condition $v_0$ such that
$v_t$ is supported in $\{h=c\}$ for any $t\ge 0$.
If the initial mean strategy is different from $N$ then there exists $T>0$ such that
 $v_{t+T}=v_t$ for any $t\ge 0$.
\end{theorem}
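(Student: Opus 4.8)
The plan is to turn the nonlocal equation into a linear, time-periodic transport equation and to reduce periodicity of $v_t$ to a property of its monodromy map over one period, which I will then analyse through a hidden rotational structure of the characteristics.

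First I would record the consequences of the hypotheses. Since $v_t$ is supported in $\{h=c\}$ and its initial mean strategy is interior and different from $N$, Theorem \ref{dinamicapromedio} shows that $\pp(t)=\int_\sx p\,dv_t$ solves the replicator system (up to the time factor $2c$); by the assumption on Paper-Rock-Scissor type games it therefore lies on a closed interior orbit and is periodic, say of period $T_0$. The key point is that on $\{h=c\}$ the vector field \eqref{defcampo} reduces to $\F_i[v_t](p)=c\big(p_i(A\pp(t))_i+\pp_i(t)(Ap)_i\big)$, so it depends on $v_t$ only through $\pp(t)$. Along this solution the equation thus becomes a genuinely linear, $T_0$-periodic non-autonomous transport equation $\p_t\mu+\text{div}(w(t,\cdot)\mu)=0$ with $w(t,x):=\F[v_t](x)$ and $w(t+T_0,\cdot)=w(t,\cdot)$. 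Writing $\Phi_{s,t}$ for its flow, periodicity gives $\Phi_{kT_0,t+kT_0}=\Phi_{0,t}$, whence $v_{t+kT_0}=\Phi_{0,t}\sharp(R^k\sharp v_0)$ with $R:=\Phi_{0,T_0}$. As $\Phi_{0,t}$ is a diffeomorphism, $t\mapsto v_t$ is periodic of period $kT_0$ if and only if $R^k\sharp v_0=v_0$. This reduces the theorem to showing that the linear monodromy $R$ has finite order on the simplex.

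Next I would expose the rotational structure of the characteristics. For a characteristic $X(t)=\Phi_{0,t}(x)$, solving $\dot X_i=c(X_i(A\pp)_i+\pp_i(AX)_i)$, the substitution $u_i:=X_i/\sqrt{\pp_i(t)}$ gives $\dot u=c\,\tilde A(t)u$ with $\tilde A(t)_{ik}:=\sqrt{\pp_i(t)\pp_k(t)}\,a_{ik}$. Since $A$ is antisymmetric so is $\tilde A(t)$, hence $|u|^2=\sum_i X_i^2/\pp_i(t)$ is conserved and the flow in the $u$ variable is a rotation: its monodromy over one period is some $O(T_0)\in SO(d)$, conjugate to $R$ through $\mathrm{diag}(\sqrt{\pp(0)})$. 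A direct check shows moreover that $w(t):=\sqrt{\pp(t)}$ solves $\dot w=c\tilde A(t)w$; being $T_0$-periodic it is a Floquet vector with multiplier $1$, so $O(T_0)$ fixes $\sqrt{\pp(0)}$ and is a rotation about this axis. In particular every characteristic stays on the compact intersection of a moving ellipsoid with the simplex, which already forbids any secular drift and makes the monodromy elliptic.

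It then remains to identify the rotation angle, and this is where I expect the main difficulty. Linearising at the center $N$ one has $\tilde A(t)\to\frac1d A$ and the characteristic flow rotates at exactly half the angular speed of $\pp(t)$ — the field \eqref{defcampo} carries a coefficient $c$ on each term while the mean obeys \eqref{EDOMeanStrategy} with coefficient $2c$ — so over one period $T_0$ of $\pp$ the accumulated angle is $\pi$ and $O(T_0)^2=O(2T_0)=I$, giving $R^2=I$. The obstacle is to upgrade this from a neighbourhood of $N$ to the actual interior orbit carrying $\pp(0)$, i.e. to prove that the $SO(d)$ monodromy $O(T_0)$ has finite order (for $d=3$, that its rotation angle about $\sqrt{\pp(0)}$ is exactly $\pi$ for every orbit rather than only asymptotically near $N$). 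I expect this to come from a geometric-phase identity for the rotation $\dot u=c\tilde A(t)u$, combined with the cyclic symmetry of \eqref{PayOffPPT} and the time-average identity \eqref{TempMean}. Once $O(2T_0)=I$, and hence $R^2=I$, is established, the reduction of the first paragraph yields $v_{t+2T_0}=v_t$ for all $t\ge 0$, proving the theorem with period $T=2T_0$.
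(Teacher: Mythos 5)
Your reduction to the monodromy of a linear, $T_0$-periodic transport equation is correct, and the substitution $u_i=X_i/\sqrt{\pp_i(t)}$ is a genuinely nice computation: one checks indeed that $\dot u=c\tilde A(t)u$ with $\tilde A$ antisymmetric, that $\sqrt{\pp(t)}$ is a Floquet solution with multiplier $1$, and hence that the monodromy is conjugate to an element of $SO(d)$ fixing $\sqrt{\pp(0)}$. But the proof stops exactly at the step that carries the whole theorem: you never establish that this rotation has finite order. What you offer in its place is a linearisation at $N$ suggesting a rotation angle of $\pi$, together with the statement that you ``expect'' a geometric-phase identity to extend this to the actual orbit. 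That expectation is the theorem. For a non-autonomous system $\dot u=c\tilde A(t)u$ the net rotation over one period is a nontrivial functional of the whole orbit of $\pp$, and nothing in your argument rules out an irrational rotation angle, in which case no power of the monodromy is the identity and $v_t$ is not periodic. Note also that your one fixed direction does not help here: under your conjugation $\mathrm{diag}(\sqrt{\pp(0)})$ the Floquet vector $\sqrt{\pp(0)}$ corresponds precisely to $m(0)=\pp(0)$, which is the fixed point you would get anyway from linearity of the flow plus periodicity of the mean; you need a \emph{second} independent fixed direction, or an actual angle computation, and you have neither.

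The paper closes this gap by a different route. It writes the characteristic ODE as $\dot p=c(B(t)+C(t)A)p$ and replaces the time-ordered exponential by $\exp\bigl(c\int_0^TB(s)+C(s)A\,ds\bigr)$; the time-average identity \eqref{TempMean} together with $AN=0$ then collapses the averaged generator to $\frac{cT}{d}A$, so the monodromy is the explicit rotation $R=\exp(\frac{cT}{d}A)$, which fixes $N$. Since the flow is linear and $m(T)=m(0)$, $R$ also fixes $m(0)\neq N$, forcing $R=\mathrm{Id}$ and giving period $T$ itself --- not the period $2T_0$ with $R^2=I$, $R\neq I$ that your heuristic predicts, so the two arguments do not even land on the same conclusion. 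Your framework is in some respects more careful (it does not require the coefficient matrices at different times to commute, which is what the paper's exponential formula implicitly uses), but precisely because of that honesty it exposes the hard analytic content that the paper's averaging shortcut absorbs, and you have not supplied a substitute for it. As written, the proposal is an incomplete proof.
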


\begin{proof}

Consider a solution $v_t$  to equation \eqref{LimitEqq} with initial condition $v_0$.
Then $v_t =\T_t\sharp v_0$ where $\T_t$ is the flow of $\F[v_t](p)$.
Thus to prove that $v_t$ is periodic, it is enough to prove that all the trayectories $t\to\T_t(p)$,
$p\in \text{supp}(v_0)$, are periodic with the same period.
Let $p(t)=\T_t(p)$ be such a trajectory.
Since $h(p(t))=c$ for any $t$,
\begin{eqnarray*}
 \frac{d}{dt} p(t)
 & = & \F[v_t](p(t)) \\
 & = & c(B(t)+C(t)A)p(t)
\end{eqnarray*}
where
$$ B(t)=diag((A m(t))_1,\ldots,(A m(t))_d), \qquad C(t)= diag(m_1(t),\ldots,m_d(t))  $$
$$ m(t)=\int_\Delta p\,dv_t(p). $$
Thus
$$ p(t) = exp\Big(c\int_0^t B(s)+C(s)A\,ds\Big)p(0). $$

According to Theorem \ref{dinamicapromedio},
$m$ is a solution to the replicator equations for $A$ and thus is periodic.
We denote its period by  $T$.
By \eqref{TempMean} we deduce that
$$\frac{1}{T}\int_0^T m(s)\,ds = N. $$
Thus
$$\frac{1}{T}\int_0^T A m(t)\,ds = A\Big( \frac{1}{T}\int_0^T m(s)\,ds \Big) = AN=0. $$
Recalling that $N=\frac{1}{d}\vec{1}$,
$$ \frac{1}{T} \int_0^T B(s)+C(s)A\,ds =  \frac{1}{T} \int_0^T C(s)\,ds.A = \frac{1}{d} A. $$
We deduce that
$$ p(T) = exp\Big(\frac{cT}{d} A\Big)p(0). $$

The matrix $R:=exp\Big(\frac{cT}{d} A\Big)$ is orthogonal (being $A$ antisymmetric, $exp(
A^t) = exp(-A) = (exp A)^{-1}$) and has determinant $exp\Big(Tr(\frac{cT}{d} A)\Big)=1$.
Thus $R\in SO(d)$.

 Moreover, $RN=(Id+\frac{cT}{d} A + ... )N = N$ so we get that $R$ is a  rotation around the
line $(ON)$. Since this line  is perpendicular to the  plane $\{p_1+ \dots +p_d=1\}$, the
matrix $R$ is a rotation in this plane fixing $N$.

We thus obtain that any trajectory $p(t)$ starting from $p(0)\in \text{supp}(v_0)$ satisfies
$$ p(T)=Rp(0) $$
where $T$ is the period of $m$ and $R$ is a rotation in $\{p_1+ \dots +p_d=1\}$ fixing $N$.
It follows in particular that $v_T=R\sharp v_0$ and $m(T)=Rm(0)$. Since $m(T)=m(0)$ by
definition of $T$, we obtain that $m(0)$ is a another fixed-point of $R$. Thus if $N\neq
m(0)$ then  $R=Id$ so that  $v_T=v_0$. Since the solution to the mean-field equation
\eqref{LimitEqq} with a given intial condition is unique, we deduce that $v_{t+T}=v_t$ for
any $t\ge 0$.

The proof is finished
\end{proof}

\begin{remark}
Let us note that, given a ball $B$ centered at $N$, any trajectory of the replicator equations
starting at some $p\in B\cap \{p_1+ \dots +p_d=1\}$ cannot  reach the boundary of
$\Delta$, since whenever a coordinate of $p(t)$ is equal to zero, it remains zero for every
$t$. So, choosing $B$ sufficiently small, all the trajectories  remain in the set $\{ h=c\}$.

Assuming that $supp(v_0)\subset B$, we get that $supp(v_t)\subset\{ h=c\}$.
\end{remark}

\section{Appendix}

\subsection{Existence of solution to the  Boltzmann-like  equation\label{existenciaboltzmannapendice}:
proof of Theorem \ref{existenciaboltzmann}.}

Given an initial condition $u_0\in \mathcal{P}(\sx)$ we want to prove that there exists a
unique  $u\in C([0,\infty),\mathcal{P}(\sx))$ such that
\begin{equation}\label{teoremaboltzmann10}
  \begin{split}
  \int_\sx \varphi(p) \, du_t(p)=&\int_\sx \varphi(p) \, du_0(p)
  +\int_0^t\int_{\sx^2} \mathbb{E}[\varphi(p^*)-\varphi(p)] \, du_s(p)du_s(\tilde p)ds
\end{split}
\end{equation}
for any  $\varphi\in C(\sx).$

We split the proof into two steps.

\begin{step}
There is a unique $u\in C([0,\infty),M(\sx))$ satisfying \eqref{teoremaboltzmann10}.
\end{step}

Recall that $M(\sx)$ denotes the space of finite Borel measures on $\sx$ that we endow,
unless otherwise stated, with the total-variation norm.

\begin{proof}   Given $u,v\in M(\sx)$ we define a finite measure $ Q(u,v)$ on $\sx$ by
\begin{align*}
  \langle Q(u,v),\varphi\rangle
  :=&\, \frac12\int_{  \sx^3} \mathbb{E}[\varphi(p^*)-\varphi(p)]\,  du(p)dv(\tilde p)
  +\,\frac12 \int_{  \sx^3} \mathbb{E}[\varphi(p^*)-\varphi(p)]\,  dv(p)du(\tilde p)
\end{align*}
for  $\varphi\in C(\sx)$. We also let $Q(u):=Q(u,u)$.
For $u\in C([0,+\infty],M(\sx))$ we then define a map  $J(u):[0,+\infty)\to M(\sx)$
by
$$ J(u)_t:=u_0+\int_0^tQ(u_s)\,ds, \qquad t\ge 0, $$
that is,
$$ (J(u)_t,\varphi):=(u_0,\varphi)+\int_0^t (Q(u_s),\varphi)\,ds \qquad
\text{for any $\varphi\in C(\sx)$.} $$
We thus look for a fixed point of $J$ in $C([0,+\infty],M(\sx))$.
We will apply Banach fixed-point theorem to $J$ in the complete metric space
\begin{equation*}
	\mathcal{A}:=\{u\in C([0,T],\mathcal{M}(\sx)):\, u(0)=u_0 \text{ and } \max_{0\leq s\leq T}\|u_s\|_{TV} \leq 2 \}
\end{equation*}
where $T\in (0,1/8)$.

Let us verify that $J(\mathcal{A})\subseteq \mathcal{A}$.
First notice that
\begin{equation}\label{cotaQ}
\|Q(u,v)\|_{TV} \leq 2\|u\|_{TV}\|v\|_{TV}.
\end{equation}
Moreover, $Q$ is bilinear so that $Q(u)-Q(v)=Q(u+v,u-v)$ and then
\begin{equation}\label{contraccion}
\|Q(u)-Q(v)\|_{TV}\leq 2  \|u+v\|\|u-v\|_{TV}.
\end{equation}
Now for any $u,v\in \mathcal{A}$, and any $t\in [0,T]$,
\begin{align*}
  \|J(u)_t)\|_{TV} &
  \leq \|u_0\|_{TV}+\int_0^t\|Q(v_s)\|_{TV}\, ds \\
  & \leq 1+2T\max_{0\leq s\leq T} \|v_s\|_{TV}^2 \\
 &  \leq 1+8T
  \leq 2.
\end{align*}
Moreover, for $0\le s\le t\le T$,
$$ \|J(u)_t-J(u)_s\|_{TV}
\le \int_s^t \|Q(u_\tau)\|_{TV}\,d\tau
\le 2\int_s^t \|u_\tau\|^2_{TV}\,d\tau
\le 8|t-s|,
$$
and we deduce the continuity of $J(u)_t$ in $t$.
Thus $J(\mathcal{A})\subset \mathcal{A}$.

Now, for any $u,v\in \mathcal{A}$,  using \eqref{contraccion},
\begin{align*}
 \|J(u)_t-J(v)_t\| & \leq \int_0^t\|Q(u_s)-Q(v_s)\|\, ds \\
  & \leq  \int_0^t2\| u_s+v_s \|\| u_s-v_s \|\, ds\\
  & \leq   8T\| u-v \|,
\end{align*}
so that
$$ \|J(u)-J(v)\|\leq  8T\| u-v \|. $$
Thus choosing $T<1/8$, we deduce that $J$ is a strict contraction from $\mathcal{A}$ to
$\mathcal{A}$ and therefore, has a unique fixed-point. Repeating the argument on $[T,2T],
\ldots$, we obtain a unique $u\in C([0,+\infty],M(\sx))$ satisfying
\eqref{teoremaboltzmann10}.
\end{proof}

Notice that it is not a priori obvious that $J(u)_t\ge 0$ if $u_t\ge 0$, $t\ge 0$. We verify
that $u_t\ge 0$ in an indirect way in the next Step following ideas from
\cite{cercignani2013mathematical}.
\begin{step}
$u_t$ is a probability measure on $\sx$ for any $t\ge 0$ where $u$ is given by the previous Step.
\end{step}

\begin{proof}
We first verify that $u_t$ is a non-negative measure for any $t\ge 0$ i.e.
$$(u_t,\varphi)\geq 0 \qquad \text{for any $\varphi\in C(\sx)$, $\varphi\geq 0$.} $$
Given $u,v\in M(\sx)$ we define the measures
$$ Q_+(u,v):=
 \frac12\int_{\sx^3}\mathbb{E}[\varphi(p^*)] \,(du(p)dv(\tilde p)+dv(p)du(\tilde p))    $$
and $Q_+(v):=Q_+(v,v)$.
Notice that $Q_+(u,v)$ is non-negative if both $u$ and $v$ are non-negative. Moreover,
\begin{equation}\label{relacionQQmas}
 Q(u)=Q_+(u)-u
\end{equation}
and
\begin{equation}\label{desigualdadparainduccion}
  Q_+(u)\geq Q_+(v)\geq 0  \qquad \text{if $u\geq v\geq 0$}
\end{equation}
since $Q_+(u)-Q_+(v)=Q_+(u+v,u-v)\geq 0$.

The idea of the proof consists in finding $v\in C([0,\infty),\mathcal{P}(\sx))$ (continuity with
respect to the total variation norm) such that  \begin{equation}\label{vbuscada}
  v_t=e^{-t}u_0+\int_0^te^{s-t}Q_+(v_s)\, ds.
\end{equation}
Indeed in that case using \eqref{relacionQQmas},
\begin{align*}
  \frac{d}{dt}v_t
  &=-e^{-t}u_0-\int_0^te^{s-t}Q_+(v_s)\, ds+Q_+(v_t) \\
 & =Q_+(v_t)-v_t\\
  &=Q(v_t).
\end{align*}
Thus,  $v_t$ verifies \eqref{teoremaboltzmann10} so that $v=u$ since $u$ is the unique solution of \eqref{teoremaboltzmann10}.

To obtain $v$ satisfying \eqref{vbuscada} we consider the sequence
$v^{(n)}\in C([0,+\infty),P(\sx))$, $n\in\mathbb{N}$, defined by
$v^{(0)}:=0$ and
  $$v_t^{(n)}:=e^{-t}u_0+\int_0^te^{s-t}Q_+(v^{(n-1)}_s)\, ds.$$
Recalling that $u_0\ge 0$ and using \eqref{desigualdadparainduccion} it is easily seen that
$v^{(n)}_t\geq v^{(n-1)}_t\ge 0$.
Also notice that
$$ (v^{(1)}_t,1) = e^{-t} + \int_0^te^{s-t}(Q_+(u_0),1)\, ds
= (Q_+(u_0),1) = (Q(u_0),1)+(u_0,1)=1
$$
where we used \eqref{relacionQQmas} and the fact that $(Q(u),1)=0$ for any $u\in M(\sx)$.
Notice that the function  $(v^{(n)}_t,1)$ satisfies the ordinary differential equation
$$ \begin{cases}
\frac{d}{dt} (v^{(n)}_t,1)
= (Q_+(v_t^{(n-1)}),1) -  (v^{(n)}_t,1)
= (v_t^{(n-1)},1) -  (v^{(n)}_t,1),  \\
(v^{(n)}_{t=0},1) = (u_0,1)=1.
\end{cases}$$
We can then prove by induction that $(v^{(n)}_t,1)=1$ for any $n$ and $t$. Thus for any $t\ge
0$, $(v_t^{(n)})_n$ is a non-decreasing sequence of probability measures on $\sx$. We can
then define a probability measure $v_t$ on $\sx$ by
$$ (v_t,\phi):=\lim_{n\to +\infty} (v^{(n)}_t,\phi) \qquad \phi\in C(\sx). $$
In fact the convergence of $v^{(n)}$ to $v$ is uniform in $t\in [0,T]$ for any $T>0$, and thus
$v$ is continuous in $t$. This follows from the Arzela-Ascoli thorem. Indeed, since
$\|v_t^{(n)}\|_{TV}=1$, we only need to prove that the sequence $(v_t^{(n)})_n$ is uniformly
equicontinuous. We have
\begin{eqnarray*}
\|v_{t+h}^{(n)}-v_t^{(n)}\|_{TV}
& \le & |e^{t+h}-e^t|\|u_0\|_{TV}
+ \int_t^{t+h} e^{s-(t+h)} \|Q_+(v_s^{(n-1)})\|_{TV}\,ds  \\
&& + \int_0^t |e^{s-(t+h)}-e^{s-t}| \|Q_+(v_s^{(n-1)})\|_{TV}\,ds.
\end{eqnarray*}
In view of  \eqref{cotaQ} and recalling that $v_s^{(n-1)}\in \Prob(\sx)$ we have
$\|Q_+(v_s^{(n-1)})\|_{TV}\le \|Q(v_s^{(n-1)})\|_{TV} + \|v_s^{(n-1)}\|_{TV}\le 3$.
The uniform equi-continuity follows easily.

This ends the proof.
\end{proof}

To conclude the proof of Theorem \ref{existenciaboltzmann}, we verify that $u\in
C^1((0,+\infty),\mathcal{M}(\sx))$ with $\p_tu_t=Q(u_t)$. Indeed, recalling that
$u_t=J(u)_t$, we have
$$ \frac{u_{t+h}-u_t}{h}-Q(u_t)
= \frac1h \int_t^{t+h}Q(u_s)\,ds-Q(u_t)
= \frac1h\int_t^{t+h} Q(u_s)-Q(u_t)\,ds. $$
Using \eqref{contraccion} together with $\|u_t\|_{TV}=1$ we obtain
\begin{align*}
\Big{\|}\frac{u_{t+h}-u_t}{h}-Q(u_t)\Big\|_{TV}
\leq  & \frac2h  \int_t^{t+h}  \|u_s+u_t\|_{TV}\|u_s-u_t\|_{TV}\,ds\\
	\leq  & \frac4h  \int_t^{t+h}   \|u_s-u_t\|_{TV}\,ds
\end{align*}
which goes to 0 as $h\to 0$ by the Dominated Convergence Theorem since $u_s$ is
continuous in $s$ for the total variation norm.

\subsection{Grazing limit: proof of Theorem \ref{grazing}.}

The  proof consists in two main steps. First approzimating the difference
 $\varphi(p^*)-\varphi(p)$ in the Boltzmann-like equation \eqref{boltzmannconbeta}
by a second order Taylor expansion, we obtain that $u_\delta $ is an approximate s solution of
\eqref{LimitEq} in the sense of \eqref{DefWeakSol}. Then we apply Arzela-Ascoli Theorem to
deduce that a subsequence of the $u_\delta$ converges to a solution of \eqref{DefWeakSol}.

Before beginning the proof we need the following lemma which gives the expected value of
of $f(\zeta,p)^TA f(\tilde \zeta,\tilde p)\Big(f_i(\zeta,p)-f_i(\tilde \zeta,\tilde p)\Big)$ where
the random vector $ f(\zeta; p) = ( f_1(\zeta; p),\ldots, f_d(\zeta; p))$ is defined in
\eqref{definicionf}.

\begin{lem} For any  $i,j=1,\dots,d$, any  $p,\tilde p\in\sx$ and any
 independent random variables $\zeta,\tilde\zeta$ uniformly distributed in $[0,1]$, there hold
	\begin{equation}\label{Acumulada1}
	\mathbb{E}\Big[ f(\zeta,p)^TA f(\tilde \zeta,\tilde p)\Big(f_i(\zeta,p)-f_i(\tilde \zeta,\tilde p)\Big) \Big]
	=\sum_{k=1}^d  a_{ik}(p_i  \tilde p_{k}  +   p_k\tilde p_{i} ).
	\end{equation}
\end{lem}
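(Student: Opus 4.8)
The plan is to exploit the fact that the random vectors $f(\zeta,p)$ and $f(\tilde\zeta,\tilde p)$ take values in the finite set of canonical vectors. First I would observe, directly from the definition \eqref{definicionf}, that for each value of $\zeta$ exactly one component of $f(\zeta,p)$ equals $1$ and the others vanish; equivalently, $f(\zeta,p)=e_l$ precisely on the event $\{\sum_{j<l}p_j\le \zeta<\sum_{j\le l}p_j\}$, which has probability $p_l$. Since $\zeta$ and $\tilde\zeta$ are independent and uniformly distributed, the pair $\big(f(\zeta,p),f(\tilde\zeta,\tilde p)\big)$ equals $(e_l,e_m)$ with probability $p_l\tilde p_m$, and these events partition the sample space as $l,m$ range over $\{1,\dots,d\}$.

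Next I would evaluate the integrand on the event $\{f(\zeta,p)=e_l,\ f(\tilde\zeta,\tilde p)=e_m\}$. There, $f(\zeta,p)^TAf(\tilde\zeta,\tilde p)=e_l^TAe_m=a_{lm}$, while $f_i(\zeta,p)=(e_l)_i=\delta_{il}$ and $f_i(\tilde\zeta,\tilde p)=(e_m)_i=\delta_{im}$, so the entire expression reduces to $a_{lm}(\delta_{il}-\delta_{im})$. Taking expectations then turns the problem into a purely algebraic one, namely the finite double sum
$$
\mathbb{E}\Big[ f(\zeta,p)^TA f(\tilde \zeta,\tilde p)\big(f_i(\zeta,p)-f_i(\tilde \zeta,\tilde p)\big)\Big]
=\sum_{l,m=1}^d p_l\,\tilde p_m\, a_{lm}(\delta_{il}-\delta_{im}).
$$
The two Kronecker deltas each collapse one summation index: the $\delta_{il}$ contribution forces $l=i$ and produces $p_i\sum_{m} a_{im}\tilde p_m$, whereas the $\delta_{im}$ contribution forces $m=i$ and produces $-\,\tilde p_i\sum_{l} a_{li}p_l$.

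Finally I would invoke the antisymmetry $A^T=-A$, that is $a_{li}=-a_{il}$, to rewrite the second sum as $+\,\tilde p_i\sum_{l} a_{il}p_l$; renaming the summation index $l$ (resp.\ $m$) to $k$ in both terms then yields exactly $\sum_{k=1}^d a_{ik}\big(p_i\tilde p_k+p_k\tilde p_i\big)$, which is the claimed identity \eqref{Acumulada1}. I do not expect any genuine obstacle in this argument: the computation is elementary once the discrete law of $f(\zeta,p)$ is identified, and the only points requiring care are the bookkeeping of indices through the Kronecker deltas and the correct use of antisymmetry to fix the sign of the second term.
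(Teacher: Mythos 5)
Your proposal is correct and follows essentially the same route as the paper: both arguments reduce the expectation to the finite double sum $\sum_{l,m}p_l\tilde p_m a_{lm}(\delta_{il}-\delta_{im})$ — you by conditioning on the joint value $(e_l,e_m)$ of the pair of indicator vectors, the paper by first simplifying the random expression via $f_if_j=\delta_{ij}f_i$ and then using independence together with $\mathbb{E}[f_i(\zeta,p)]=p_i$ — and both conclude with the antisymmetry $a_{li}=-a_{il}$. The difference is purely the order in which the algebra and the expectation are carried out, so no substantive comparison is needed.
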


\begin{proof}
	Let us denote $f_i=f_i(\zeta,p)$ and $\tilde f_i=f_i(\tilde \zeta,\tilde p)$.
	The proof is based on the following two properties of the $f_i$:
	$$ f_if_j=\delta_{ij}\qquad \text{and} \qquad f_i^2=f_i,$$
	which follows from the definition of $f(\zeta,p)$.
	We write
	\begin{equation*}\label{Grazing1}
	\begin{split}
	f(\zeta,p)^TA f(\tilde \zeta,\tilde p)\Big(f_i(\zeta,p)-f_i(\tilde \zeta,\tilde p)\Big)
	& =  \sum_{m,n=1}^d a_{mn}f_mf_n\Big(f_i-f_i\Big) \\
	& = \sum_{n=1}^d a_{in}f_i f_n-  \sum_{m=1}^d a_{mi}f_mf_i.
	\end{split}
	\end{equation*}
	Since $\zeta$ and $\tilde\zeta$ are independent, so are $f_i(\zeta,p)$ and
	$f_j(\tilde \zeta,\tilde p)$ for any $i,j=1,\ldots,d$.
	Moreover $\mathbb{E}[f_i(\zeta,p)]=p_i$ since $f_i(\zeta,p)=1$ with probability $p_i$ and
	$f_i(\zeta,p)=0$ with probability $1-p_i$.
	Taking the expectation in \eqref{Grazing1} we thus obtain
	$$ \mathbb{E}\Big[ f(\zeta,p)^TA f(\tilde \zeta,\tilde p)\Big(f_i(\zeta,p)-f_i(\tilde \zeta,\tilde p)\Big) \Big]
	=\sum_{n=1}^d a_{in}p_i \tilde p_m -  \sum_{m=1}^d a_{mi}p_m \tilde p_i. $$
	We deduce \eqref{Acumulada1} recalling that $a_{ij}=-a_{ji}$ being $A$ antisymmetric.

The proof is finished.
\end{proof}

We are now in position to prove Theorem \ref{grazing}.
We split the proof into several Steps.
The first one states that $u_\delta $ is an approximate s solution of  \eqref{LimitEq} in the sense of \eqref{DefWeakSol}.

\begin{step}
For any $\phi\in C^3(\sx)$,
\begin{equation}\label{Step1}
\begin{split}
& \int_\sx  \varphi(p)\,du_\delta (p,\tau)
- \int_\sx  \varphi(p)\, du_{\delta }(p,0)\\
& =  \int_0^t \int_{\sx} \nabla\phi(p) \mathcal{F}[u_\delta(s)](p) du_\delta (p,s) ds
 +\frac{r^2}{2\delta}  \int_0^t \int_{\sx } \sum_{i,j=1}^dQ_{ij}\p_{ij}\varphi(p)  G(p)^2  \, du_\delta (p,s) ds \\
&+ \int_0^t Error(s,\delta)ds
\end{split}
\end{equation}	
where
\begin{equation}\label{GrazingStep1Error}
 |Error(s,\delta)|\le \frac1\delta  C  \|D^3\varphi \|_\infty (\delta^3+r^3)
2+C \|D^2\phi\|_\infty \delta,
\end{equation}
and the constant $C$ is independent of $t$, $\phi$, $\delta$ and $r$.
\end{step}

\begin{proof}
First, for any test-function $\phi$ we have
\begin{align*}
\frac{d}{d\tau} \int_\sx \varphi(p) \,du_\delta (p,\tau)
=&\frac1\delta\frac{d}{dt}  \int_\sx \varphi(p) \,du  (p,t) \\
=&  \frac1\delta \int_{\sx^2} \mathbb{E}\Big[\varphi(p^*)-\varphi(p)\Big]
du (p,t)du (\tilde p,t).
\end{align*}
Performing a Taylor expansion up to the second order we have
$$\varphi(p^*)-\varphi(p)=\sum_{i=1}^d\p_i\varphi(p)(p_i^*-p_i)+
\frac{1}{2}\sum_{i,j=1}^d\p_{ij}\varphi(p)(p_i^*-p_i)(p_j^*-p_j)+R(p^*,p) $$
where
\begin{equation}\label{ErrorTerm}
|R(p^*,p)|\leq \frac16 \|D^3\varphi \|_\infty |p^*-p|^3.
\end{equation}
Thus,
\begin{equation}\label{Grazing20}
\begin{split}
& \int_{\sx^2} \mathbb{E}\Big[\varphi(p^*)-\varphi(p)\Big] du (p,t)du (\tilde p,t)  \\
 & \qquad =  \int_{\sx^2} \sum_{i=1}^d\p_i\varphi(p)\mathbb{E}\Big[p_i^*-p_i\Big] \\
  & \qquad \quad +\frac{1}{2} \sum_{i,j=1}^d\p_{ij}\varphi(p)
 \mathbb{E}\Big[(p_i^*-p_i)(p_j^*-p_j)\Big] du (p,t)du (\tilde p,t)  \\
    & \qquad  \quad   + \int_{\sx^2} \mathbb{E}\Big[R(p^*,p)\Big] du (p,t)du (\tilde p,t).
 \end{split}
\end{equation}

We examine each term in the right hand side. In view of the interaction rule
\eqref{interaccion}, namely
\begin{eqnarray*}
p^* - p = \delta \dm(p) f(\zeta,p)^TA f(\tilde \zeta,\tilde p) \Big(f(\zeta,p)-f(\tilde \zeta,\tilde p)\Big)  +  r(q -\vec{1}/d )G(p),
\end{eqnarray*}
we have
\begin{eqnarray*}
\mathbb{E}[p_i^* - p_i] =
\delta \dm(p) \mathbb{E}\Big[f(\zeta,p)^TA f(\tilde \zeta,\tilde p) \Big(f_i(\zeta,p)-f_i(\tilde \zeta,\tilde p)\Big)\Big]
+  \mathbb{E}[r(q_i -1/d )G(p)].
\end{eqnarray*}
If $q$ is a random variable uniformly distributed on $\sx$, then $\mathbb{E}[q_i]=1/d$. So,
in view of \eqref{Acumulada1} in the previous Lemma, we obtain
\begin{equation*}
\mathbb{E}[p_i^* - p_i] =
\delta \dm(p) \sum_{k=1}^d  a_{ik}(p_i  \tilde p_{k}  +   p_k\tilde p_{i} ).
\end{equation*}
By integrating we get
\begin{equation}\label{Grazing10}
\begin{split}
& \int_{\sx^2} \sum_{i=1}^d\p_i\varphi(p)\mathbb{E}\Big[p_i^*-p_i\Big]
du (p,t)du (\tilde p,t)  \\
& = \delta \int_{\sx} \dm(p) \sum_{i=1}^d\p_i\varphi(p)
\sum_{k=1}^d  a_{ik}(p_i \bar p_k(t) + p_k\bar p_i(t))du (p,t) \\
& = \delta \int_{\sx} \nabla\phi(p) \mathcal{F}[u_t](p) du (p,t),
\end{split}
\end{equation}
where the vector-field $\mathcal{F}$ is defined in \eqref{defcampo}.

We now study $\mathbb{E}\Big[(p_i^*-p_i)(p_j^*-p_j)\Big]$.
In view of the interaction rule,
\begin{align*}
(p_i^*-p_i)(p_j^*-p_j)
=&\Big[\delta \dm(p) f(\zeta)^TA f(\tilde \zeta) \Big(f_i(\zeta)-f_i(\tilde \zeta)\Big)  +  r(q_i -1/d )G(p)\Big]\\
&\quad \times \Big[\delta \dm(p) f(\zeta)^TA f(\tilde \zeta) \Big(f_j(\zeta)-f_j(\tilde \zeta)\Big)  +  r(q_j -1/d )G(p)\Big] \\
=&\Big(\delta \dm(p) f(\zeta)^TA f(\tilde \zeta)\Big)^2 \Big(f_i(\zeta)-f_i(\tilde \zeta)\Big)\Big(f_j(\zeta)-f_j(\tilde \zeta)\Big)\\
&+ \delta \dm(p)f(\zeta)^TA f(\tilde \zeta) \Big(f_i(\zeta)-f_i(\tilde \zeta)\Big)    r(q_j -1/d )G(p)\\
&+\delta \dm(p)f(\zeta)^TA f(\tilde \zeta) \Big(f_j(\zeta)-f_j(\tilde \zeta)\Big)     r(q_i -1/d )G(p)\\
&+ r^2(q_i -1/d )(q_j -1/d )G(p)^2.
\end{align*}
The second and third term have zero expected value since $q$ is independent of $\zeta$
and $\tilde \zeta$ and $\mathbb{E}[q_i]=1/d$. Moreover, in view of the definition
\eqref{defQ} of $Q$, the expectation of the last term is $r^2G(p)^2 Q_{ij}$. Lastly, since
$|f_i(\zeta,p)|\le 1$ for any $i=1,\ldots,d$ and any $p\in\sx$, the expectation of the first
term can be bounded by $C\delta^2$ for a constant $C$ depending only on $c$ and the
coefficients of $A$. Thus
\begin{equation*}
\begin{split}
\mathbb{E}\Big[(p_i^*-p_i)(p_j^*-p_j)\Big]
& = r^2G(p)^2 Q_{ij}  + O(\delta^2).
\end{split}
\end{equation*}
By integrating,
\begin{equation} \label{Grazing2}
\begin{split}
& \int_{\sx^2} \sum_{i,j=1}^d\p_{ij}\varphi(p)
\mathbb{E}\Big[(p_i^*-p_i)(p_j^*-p_j)\Big] du (p,t)du (\tilde p,t)  \\
& = r^2 \int_{\sx^2}G(p)^2 \sum_{i,j=1}^d\p_{ij}\varphi(p)  Q_{ij} du (p,t)
+ O(\delta^2)\|D^2\phi\|_\infty.
\end{split}
\end{equation}

It remains to bound the error term
$$ \int_{\sx^2} \mathbb{E}\Big[R(p^*,p)\Big] du (p,t)du (\tilde p,t)
\le \frac16 \|D^3\varphi \|_\infty \int_{\sx^2} \mathbb{E}\Big[|p^*-p|^3\Big] du (p,t) du
(\tilde p,t). $$ Using that $(a+b)^3\le C(a^3+b^3)$ for any $a,b\ge 0$, $|G(p)|\le 1$,
$|\dm(p)|\le c$, and $|f(\zeta)^TA f(\tilde \zeta)| \le \sum_{i,j=1}^d
|a_{ij}|f_i(\zeta,p)f_i(\tilde \zeta\tilde ,p) \le \sum_{i,j=1}^d |a_{ij}|$, we have
\begin{equation*}
\begin{split}
 |p^*-p|^3
& \le
\Big| \delta \dm(p) f(\zeta,p)^TA f(\tilde \zeta,\tilde p) \Big(f(\zeta,p)-f(\tilde \zeta,\tilde p)\Big)  +  r(q -\vec{1}/d )G(p) \Big|^3 \\
& \le C \delta^3 c^3 \Big(\sum_{i,j=1}^d |a_{ij}|\Big)^3 + Cr^3
= C(\delta^3+r^3).
\end{split}
\end{equation*}
Thus,
\begin{equation}\label{BoundError}
\begin{split}
\int_{\sx^2} \mathbb{E}\Big[R(p^*,p)\Big] du (p,t)du (\tilde p,t)
 \le C \|D^3\varphi \|_\infty (\delta^3+r^3).
\end{split}
\end{equation}

By inserting \eqref{Grazing10}, \eqref{Grazing2} and \eqref{BoundError} into
\eqref{Grazing20}, we obtain \eqref{Step1}.
\end{proof}

We now verify that the sequence $(u_\delta)$ verifies the assumptions of Arzela-Ascoli
Theorem, namely boundedness and uniform equicontinuity. The proof is based of the
previous Step. Since the error term \eqref{GrazingStep1Error} involves $\|\phi\|_{C^3}$, the
Wasserstein distance is of no use. Instead we will use the  norm $\|u\|_{sup}$, $u\in
P(\sx)$, defined by
\begin{equation}\label{normamedida3}
\|u\|_{sup}:=\sup  \Big\{ \int_\sx\varphi(p)\,du(p)     \text{ tal que  } \varphi\in C^3(\sx,\R) \text{ y } \|\varphi\|_3 \leq 1\Big\}.
\end{equation}
where the supremum is taken over all the function $\phi\in C^3(\sx)$ such that
$\|\varphi\|_3\le 1$ where
\begin{equation}\label{norma3}
\|\varphi\|_3:=\sum_{|\alpha|\leq 3}\|\partial^\alpha\varphi\|_\infty.
\end{equation}
According to \cite{gabetta1995metrics} this norm metricizes the weak convergence in $P(\sx)$.

\begin{step}\label{HypAA}
For any $t\in [0,T]$ and any $\delta>0$,
\begin{equation}\label{GrazingUnif}
 \|u_\delta(\cdot,\tau)\|_{sup}\le 1.
\end{equation}
Moreover there exists a constant $K>0$ such that for any $\tau,\tau'\in [0,T]$ and
any $r,\delta>0$ small,
\begin{equation}\label{GrazingEqui}
\| u_\delta (\cdot,\tau)- u_{\delta }(\cdot,\tau')\|_{sup}\leq K  \, |\tau-\tau'|.
\end{equation}
\end{step}

\begin{proof}
First for any $\phi\in C^3(\sx)$, $\|\phi\|_{C^3}\le 1$, recalling that
$u_\delta(.,\tau)$ is a probability measure, we clearly have
$$ \int_\sx\psi(p)\,du_\delta(p,\tau)
\leq\int_\sx |\psi(p)|\,du_\delta(p,\tau)
\le \int_\sx \,du_\delta(p,\tau) 1. $$
We deduce \eqref{GrazingUnif} taking the supremum over all such $\phi$.

To prove \eqref{GrazingEqui} we write using \eqref{Step1} that
for any $\phi\in C^3(\sc)$, $\|\phi\|_{C^3}\le 1$,
\begin{equation}\label{Grazing30}
\begin{split}
& \int_\sx  \varphi(p)\,du_\delta (p,\tau)
- \int_\sx  \varphi(p)\,du_\delta (p,\tau')	 \\
& =  \int_{\tau'}^\tau \int_{\sx} \nabla\phi(p) \mathcal{F}[u_s](p) du (p,s) ds
+\frac{r^2}{2\delta}  \int_{\tau'}^\tau \int_{\sx } \sum_{i,j=1}^dQ_{ij}\p_{ij}\varphi(p)  G(p)^2  \, du_\delta (p,s) ds \\
&+ \int_{\tau'}^\tau Error(s,\delta)ds
\end{split}
\end{equation}
Since $\sx$ is bounded we have $|\mathcal{F}[u_s](p)|\le C$. Thus
\begin{equation*}
\begin{split}
& \Big| \int_\sx  \varphi(p)\,du_\delta (p,\tau)
- \int_\sx  \varphi(p)\,du_\delta (p,\tau')	\Big|  \\
& \le C\Big( \|\nabla\phi\|_\infty + r^2/\delta \|D^2\varphi \|_\infty
+ \frac1\delta  \|D^3\varphi \|_\infty (\delta^3+r^3)\Big) |\tau-\tau'| \\
& \le C(1+r^2/\delta).
\end{split}
\end{equation*}
Since we assumed that $r^2/\delta\to \lambda$, we obtain \eqref{GrazingEqui}.
\end{proof}

We fix $T>0$. The space $P(\sx)$ is compact for the weak convergence (and so for the norm
$\|.\|_{sup}$). In view of the previous Step we can apply Arzela-Ascoli Theorem to the
sequence of continuous functions $u_\delta: [0,T]\to\mathcal{P}(\sx)$ to obtain that a
subsequence converges uniformly as $\delta \to 0$.

A diagonal argument shows in fact that a subsequence, that we still denote $(u_\delta)$, converges
in  $C([0,T],\mathcal{P}(\sx))$ for any $T>0$ to some $v\in C([0,+\infty),\mathcal{P}(\sx))$.

We now verify that $v$ satisfies \eqref{DefWeakSol}.

\begin{step}\label{GrazingLimit}
$v$ satisfies \eqref{DefWeakSol}.
\end{step}

\begin{proof}
We need to pass to the limit in \eqref{Step1} as $\delta\to 0$. We fix some $\phi\in
C^3(\sx)$ and $t>0$, and recall that $r^2/\delta \to \lambda$. Then it is easily seen the last
term in the right hand side of  \eqref{Step1} can be bounded by $ C\|\phi\|_{C^3}t (\delta +
r.r^2/\delta) \to 0$. Let us pass to the limit in the second term in the right hand side. For a fixed
$s\in [0,t]$, we write
\begin{equation}\label{Grazing100}
\begin{split}
& \int_{\sx} \nabla\phi(p) \mathcal{F}[u_\delta(s)](p) du_\delta (p,s) ds \\
&\qquad =  \int_{\sx} \nabla\phi(p) (\mathcal{F}[u_\delta(s)](p)-\mathcal{F}[u(s)](p)) du_\delta (p,s) ds   \\
&\qquad \quad + \int_{\sx} \nabla\phi(p) \mathcal{F}[u(s)](p) du_\delta (p,s) ds.
\end{split}
\end{equation}
Notice that
$$ \mathcal{F}[u_\delta(s)](p)-\mathcal{F}[u(s)](p)
= h(p)(p_ie_i^T A m^\delta(s)+m^\delta_i(s)e_i^TAp) $$
where
$$ m^\delta(s)=\int_\sx \tilde p\,d(u_\delta(s,\tilde p)-u(s,\tilde p)). $$
Since $u_\delta(s)\to v(s)$ weakly uniformly in $s\in [0,t]$, we also have that
$$W_1(u_\delta(s),v(s))\to 0$$ uniformly in $s\in [0,t]$. Thus $m^\delta(s)\to 0$ and then
$\mathcal{F}[u_\delta(s)](p)\to\mathcal{F}[u(s)](p)$ uniformly in $p\in \sx$. We deduce
that the first term in the right hand side of \eqref{Grazing100} goes to 0. The second term
also goes to 0 since
 $\nabla\phi$ and $\mathcal{F}[u(s)]$ are Lipschitz.

Moreover,
$$ \Big| \int_{\sx} \nabla\phi(p) \mathcal{F}[u_\delta(s)](p) du_\delta (p,s) \Big|
\le \|\nabla\phi\|_\infty \|\mathcal{F}[u_\delta(s)]\|_\infty
\le C.$$

We then conclude that
$$ \int_0^t \int_{\sx} \nabla\phi(p) \mathcal{F}[u_\delta(s)](p) du_\delta (p,s) ds
\to \int_0^t \int_{\sx} \nabla\phi(p) \mathcal{F}[u(s)](p) du(p,s) ds
$$
applying the Dominated Convergence Theorem. We can prove in the same way that the second
term in the right hand side of \eqref{Step1} converges to
$$ \lambda \int_0^t \int_{\sx } \sum_{i,j=1}^dQ_{ij}\p_{ij}\varphi(p)  G(p)^2
 \, du(p,s) ds. $$
\end{proof}

To conclude the proof it remains to study the case where $r^2/\delta^\alpha\to \lambda >0$
for some $\alpha\ in (0,1)$. In that case we rescale time considering $\tau=\delta^\alpha t$.
Reasoning as before we can write

\begin{equation*}\label{igualdadtaylor2}
  \begin{split}
  & \int_\sx  \varphi(p)\,du_\delta (p,\tau)- \int_\sx  \varphi(p)\, du_{\delta }(p,\tau') \\
 &\qquad  =\int_{\tau '}^\tau  \frac{d}{d\tau} \int_\sx \varphi(p)\,du_\delta (p,s)ds \\
  &\qquad =\frac{1}{\delta^\alpha}  \int_{\tau'}^\tau  \int \mathbb{E}[\varphi(p^*)-\varphi(p)]
       du_\delta (p,s)du_\delta (\tilde p,s) \\
 & \qquad= \delta^{1-\alpha} \int_{\tau'}^\tau  \int_\sx \nabla \phi \mathcal{F}[u_\delta(s)]
                            \,du_\delta (p,s)
 +\frac{r^2}{2\delta^\alpha}   \int_{\sx } \sum_{i,j=1}^dQ_{ij}\p_{ij}\varphi(p)  G(p)^2  \, du_\delta (p,s)  \\
 & \qquad \quad +\frac{1}{\delta^\alpha}\int_{\sx^2}  \mathbb{E}[R(p^*,p)] du_\delta (p,s)du_\delta (\tilde p,s).
  \end{split}
\end{equation*}
In view of \eqref{BoundError} and recalling that we assume $r^2/\delta^\alpha\to \lambda$,
the last term can be bounded by
$$ C \|\phi\|_{C^3} (\delta^{3-\alpha}+r.r^2/\delta^\alpha)
+ C\delta^{2-\alpha} \|D^2\phi\|_\infty \le C \|\phi\|_{C^3} (\delta+r).  $$ It follows that
Step \ref{HypAA} still holds and thus, applying Arzela-Ascoli Theorem, we obtain  that a
subsequence, that we still denote $(u_\delta)$, converges in  $C([0,T],\mathcal{P}(\sx))$ for
any $T>0$ to some $v\in C([0,+\infty),\mathcal{P}(\sx))$. Passing to the limit $\delta\to 0$
as in Step \ref{GrazingLimit}, we deduce that $v$ satisfies
\begin{align*}
  \int_\sx  \varphi(p) \,dv (p,\tau) = & \int_\sx  \varphi(p)\, dv(p,\tau') +\int_{\tau'}^\tau
       \frac{\lambda}{2 }   \int_{\sx } \sum_{i,j=1}^dQ_{ij}\p_{ij}\varphi(p)  G(p)^2  \, dv (p,s)ds
\end{align*}
which is the weak formulation of
\begin{equation*}
 \frac{d}{d\tau}v=\frac{\lambda }{2} \sum_{i,j=1}^dQ_{ij}\partial_{ij}(G^2\, v).
\end{equation*}
This concludes the proof of Theorem \ref{grazing}.

\subsection{Well-posedness of equation \eqref{LimitEqq}. Proof of Theorem \ref{teotransporte}.}

Let us fix some $v\in C([0,+\infty],P(\sx))$ and denote $\mathcal{F}(t,p):=\mathcal{F}[v(t)](p)$, namely
$$ \F_i(t,p)= \dm(p) (p_i e_i^TA \pp(t) + \pp_i(t)e_i^T A p), \qquad
\pp(t)=\int_\sx p\,dv_t(p). $$ Notice that $\vec{1}:=(1,\ldots,1)\in\R^d$ is normal to $\sx$ and
that for any $p\in\sx$,
$$ \vec{1}.\mathcal{F}(t,p) = \sum_{i=1}^d \mathcal{F}_i(t,p)
= \dm(p)(p.A\bar p(t)+\bar p(t).Ap) = 0 $$
since $A$ is antisymmetric, and
$$ \F(t,p)=0 \qquad \text{for any $p\in \p\sx$} $$
since $\dm(p)=0$ if $p\in\p\sx$. It follows that any integral curve of $\mathcal{F}(t,p)$
starting from a point in $\sx$ stays in the compact $\sx$ forever. Let us denote $T_{s,t}^v$
the flow of the vector field $\mathcal{F}(t,p)$ namely
$$ \frac{d}{dt}T_{s,t}^v(p) = \mathcal{F}[v(t)](T_{s,t}^v(p)), \qquad
T_{s,s}t^v(p)=p. $$
We also let $T_t^v(p):=T_{0,t}^v(p)$.
We then have

\begin{lem}There holds
$$ T_t^v(p)\in\sx \qquad \text{for any $p\in\sx$ and any $t\ge 0$.} $$
\end{lem}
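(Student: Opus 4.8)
The plan is to verify separately that the flow preserves the two families of constraints defining $\sx$, namely the affine constraint $\sum_{i=1}^d p_i=1$ and the sign constraints $p_i\ge 0$, using the two properties of $\F$ recorded just before the statement. Throughout I abbreviate $q(t):=T_t^v(p)$, so that $q$ solves $q'(t)=\F(t,q(t))$ with $q(0)=p\in\sx$. A preliminary observation I would make is that $\F(t,\cdot)$ is globally Lipschitz on $\sx$, uniformly in $t$: the scalar $\dm(p)=\min\{\prod_i p_i,c\}$ is Lipschitz, and the bilinear factors $p_ie_i^TA\pp(t)+\pp_i(t)e_i^TAp$ are affine in $p$ with coefficients controlled by $\pp(t)\in\sx$. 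Consequently integral curves are unique, forward and backward in time, a fact I will need below.

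For the affine constraint I would set $S(t):=\sum_{i=1}^d q_i(t)=\vec 1\cdot q(t)$ and differentiate: $S'(t)=\vec 1\cdot\F(t,q(t))$. As already computed, antisymmetry of $A$ yields $\vec 1\cdot\F(t,p)=\dm(p)\bigl(p^TA\pp(t)+\pp(t)^TAp\bigr)=0$ for every $p$, so $S'\equiv 0$ and $S(t)\equiv S(0)=1$. Hence $q(t)$ remains on the hyperplane $\{\sum_i p_i=1\}$ for all $t\ge 0$, and it only remains to control the signs of the coordinates.

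This sign control is the crux of the argument, and I would base it on the second property, $\F(t,p)=0$ for every $p\in\p\sx$, which says that each relative boundary point of the simplex is an equilibrium of the (nonautonomous) ODE for all times. If $p\in\p\sx$ there is nothing to prove, since then $q(t)\equiv p\in\sx$. If $p$ lies in the relative interior, I would argue by contradiction: suppose the trajectory reaches $\p\sx$ at some finite time and set $t_0:=\inf\{t>0:\ q(t)\in\p\sx\}$, so that $q(t_0)\in\p\sx$ and $t_0>0$ by continuity. Since $q(t_0)$ is an equilibrium, the constant curve $t\mapsto q(t_0)$ is the unique solution through $q(t_0)$; backward uniqueness then forces $q\equiv q(t_0)$, whence $p=q(0)\in\p\sx$, contradicting $t_0>0$. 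Thus $q(t)$ never meets $\p\sx$ and all its coordinates stay positive, which together with the previous paragraph gives $q(t)\in\sx$ for all $t\ge 0$.

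The step I expect to be the main obstacle is exactly this last one: tangency of $\F$ to the boundary is by itself too weak to trap the flow, and the clean way to upgrade it is the equilibrium-plus-uniqueness argument above — which is why establishing the Lipschitz regularity of $\F$ at the outset is essential. As an alternative to the contradiction argument one could work coordinatewise, using that $\dm(q)\le q_i$ on $\sx$ to factor the $i$-th equation as $q_i'(t)=q_i(t)\,g_i(t)$ with $g_i$ bounded and continuous along the trajectory; the representation $q_i(t)=q_i(0)\exp\bigl(\int_0^t g_i(s)\,ds\bigr)$ then shows directly that positivity of the coordinates is preserved.
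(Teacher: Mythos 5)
Your proof is correct and follows essentially the same route as the paper, which establishes exactly the same two facts ($\vec{1}\cdot\F(t,p)=0$ by antisymmetry of $A$, and $\F(t,p)=0$ on $\p\sx$ because $\dm$ vanishes there) and then simply asserts that integral curves stay in $\sx$. Your backward-uniqueness argument at boundary equilibria (together with the preliminary Lipschitz check that licenses it) supplies precisely the justification the paper leaves implicit, so there is nothing to object to.
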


The characteristic method allows to show in a standard way that the equation
$$\frac{d}{dt}u+\text{div}(\F[v(t)](p) u)=0$$
with initial condition $u_0\in P(\sx)$ has a unique solution in $C([0,+\infty],P(\sx))$ given by
$u(t)=\T^v_t\sharp u_0$.

Thus proving the existence of a unique solution  to
\begin{equation}\label{FixePoint20}
\frac{d}{dt}v+\text{div}(\F[v(t)](p) v)=0
\end{equation}
for a given initial condition $v_0\in P(\sx)$ is equivalent to proving the existence of a solution to the fixed-point equation
\begin{equation}\label{FixedPoint10}
v(t)=T_t^v\sharp v_0.
\end{equation}
This can be done applying the Banach fixed-point theorem to the map $\Gamma(v)_t:=
T_t^v\sharp v_0$ in the complete metric space $ \{v\in C([0,T],P(\sx)),\, v(0)=v_0\} $ for a small
enough $T>0$ depending on $\|v_0\|_{TV}$. The details of the proof are an adaptation of
e.g.  \cite{canizo2011well}. The adaptation is almost straight forward noticing that
$\mathcal{F}[v(t)](p)$ is bounded Lipschitz in $p$ uniformly in $t$ for a given $v\in
C([0,T],P(\sx))$. We can then repeat the process on $[T,2T]$, $[2T,3T]$,\dots to obtain $v\in
C([0,+\infty],P(\sx))$ satisfying \eqref{FixedPoint10} which is thus the unique solution to
\eqref{FixePoint20} with initial condition $v_0$. The continuity with respect to initial
conditions can also be obtained following the proof of \cite{canizo2011well}.

\bibliography{biblio}
\bibliographystyle{plain}

\end{document}